\newtheorem{proposition}{Proposition}
\newtheorem{theorem}{Theorem}
\newtheorem{lemma}{Lemma}
\newtheorem{assumption}{Assumption}
\newcommand{\norm}[1]{\left\lVert#1\right\rVert}
\newcommand{\e}{\epsilon}
\newcommand{\w}{\omega}
\renewcommand{\l}{\ell}
\newcommand{\Z}{\mathbb{Z}}
\newcommand{\E}{\mathbb{E}}
\newcommand{\Prob}{\mathbb{P}}
\renewcommand{\P}{\mathcal{P}}
\newcommand{\K}{\mathcal{K}}
\newcommand{\Var}{\text{Var}}
\newcommand{\F}{\mathcal{F}}
\newcommand{\X}{\textbf{X}}
\newcommand{\B}{\mathcal{B}}
\newcommand{\R}{\mathbb{R}}
\newcommand{\tr}{\text{tr}}
\let\temp\phi
\let\phi\varphi
\let\varphi\temp
\begin{document}

\title{A non-asymptotic error analysis for parallel Monte Carlo estimation from many short Markov chains}

\author{Austin Brown \orcidlink{0000-0003-1576-8381}, \\ Department of Statistical Sciences, University of Toronto, \\ ad.brown@utoronto.ca}
\maketitle

\begin{abstract}
Single-chain Markov chain Monte Carlo simulates realizations from a Markov chain to estimate expectations with the empirical average. The single-chain simulation is generally of considerable length and restricts many advantages of modern parallel computation. This paper constructs a novel many-short-chains Monte Carlo (MSC) estimator by averaging over multiple independent sums from Markov chains of a guaranteed short length. The computational advantage is the independent Markov chain simulations can be fast and may be run in parallel. The MSC estimator requires an importance sampling proposal and a drift condition on the Markov chain without requiring convergence analysis on the Markov chain. A non-asymptotic error analysis is developed for the MSC estimator under both geometric and multiplicative drift conditions. Empirical performance is illustrated on an autoregressive process and the Pólya-Gamma Gibbs sampler for Bayesian logistic regression to predict cardiovascular disease.
\end{abstract}

\noindent\textbf{\textit{Keywords:}}
concentration inequalities;
parallel Gibbs sampling;
parallel Markov chain estimation;

\noindent\textbf{\textit{MSC:}} 60J27; 60J20

\section{Introduction}

Markov chain Monte Carlo (MCMC) is a widely applicable simulation technique to estimate a integrals with respect to a complex target probability distribution $\Pi$.
Accurate estimation of integrals is central to modern Bayesian inference where the target probability distribution may not be simulated from directly. 
In the \textit{single-chain} regime, MCMC simulates realizations from one Markov chain $X_0, \ldots, X_t$ until the marginal distribution is approximately from $\Pi$. 
After discarding realizations before this "burn-in" time, the Markov chain is simulated further and the empirical average is used to approximate integrals.

A persistent drawback of the single-chain regime is a necessarily long simulation length required to reduce the bias in the estimation.
It remains an active research area to develop explicit and practically useful convergence analyses on the mixing times of Markov chains.
This computational drawback has sparked research and debate into the \textit{many-short-chains} regime by instead combining simulations from multiple independent Markov chains of substantially shorter length \citep{gelman:rubin:1992, geyer:1992, margossian:2023}.
An advantage of the many-short-chains regime is modern parallel computation allows simulating thousands, even millions, of independent Markov chains at the same time.
In applications, the many-short-chains approach appears to empirically perform well for estimation in certain scenarios, but the shortened simulation length of the Markov chains may theoretically result in a large or even unknown bias.
%In particular, increasing the number of parallel chains reduces the estimation variance but not necessarily the bias.

Our main contribution constructs a novel many-short-chains Monte Carlo (MSC) estimator and develops a theoretical non-asymptotic error analysis for estimation.
The MSC estimator is the empirical average over multiple sums from independent Markov chains until their first return time to a particular set.
A significantly unique property of the MSC estimator is that it does not require a convergence analysis on the Markov chain. 
To the best of our knowledge, the non-asymptotic analysis developed is the first for a general purpose parallel MCMC estimator that does not require a mixing time analysis of the Markov chain.
From a solely theoretical viewpoint, the MSC estimator can provide a new perspective into the empirical performance of MCMC estimation in the many-short-chains regime.
The estimator may also be useful in non-trivial applications as it may be simulated with general Markov chains such as Gibbs samplers and is described in detail in Section~\ref{section:estimator} and applied in Section~\ref{section:pg_application}.

The non-asymptotic estimation error analysis ensures reliable performance of the MSC estimator but requires a proper and careful construction.
The estimator is built upon essential theory in Markov chains for the representation of invariant measures \citep[lemma 2]{nummelin:1976} and \citep[Theorem 10.4.9]{meyn:tweedie:2009}.
The independently generated Markov chains require a random initial distribution combined with a geometric or multiplicative drift condition.
The random initial distribution uses an importance sampling proposal for $\Pi$ that is constructed without the need to compute the normalizing constant of $\Pi$.
The length of the Markov chain simulation is determined by the time to return to a set $C$ and controlled by the drift condition.
Since the drift condition ensures the return time to the set $C$ is sufficiently fast, then the MSC estimator is guaranteed to remain in the many-short-chains regime.
Section~\ref{section:mse_error_bound} develops a non-asymptotic upper bound on the mean squared error under a geometric drift condition and when the importance weights have finite variance.
Section~\ref{section:concentration_mult} develops a concentration inequality under a stronger multiplicative drift condition and uniformly bounded importance weights.
In Section~\ref{section:toy_example} and Section~\ref{section:pg_application}, practical applications are illustrated on a simple autoregressive process and a non-trivial example with the Pólya-Gamma Gibbs sampler to predict cardiovascular disease.

The novel MSC estimator contributes to the recent research in parallel implementations of Markov chain Monte Carlo (\citep{craiu:meng:2005, craiu:etal:2009, lao:2020, pierre:2020} among others).
Unbiased Markov chain Monte Carlo is a recent approach to parallel estimation which constructs an unbiased estimator from multiple Markov chains \citep{glynn:rhee:2014, pierre:etal:2020}.
This requires constructing and simulating from a joint Markov kernel between a Markov chain and lagged version of itself until the first stopping time when the two chains exactly meet.
Constructing optimal joint Markov kernels is an active area of research \citep{wang:etal:2021} and obtaining explicit convergence bounds on the time when two Markov chains meet, even in moderate dimensions, can be challenging.
In particular, the MSC estimator may be beneficial in certain applications where Unbiased Markov chain Monte Carlo may not readily accessible.
For example, the length of the Markov chain simulations in the MSC estimator depend only on the drift condition which can be simpler to establish and scale efficiently to high dimensions.
However, this may not always be the case as explicit drift conditions are often available for Markov chains such as Gibbs samplers but can also be rare for others such as Metropolis-Hastings.
Section~\ref{section:conclusion} discusses benefits and drawbacks of our results and future research possibilities.

\section{The MSC estimator}
\label{section:estimator}

Let $\X$ be a Borel measurable metric space with Borel sigma field $\B(\X)$.
We will inherently assume all functions and sets are Borel measurable unless otherwise stated.
Let $\Pi$ be a target Borel probability measure on $\X$ and our goal is to estimate integrals through simulation of a suitable function $\phi : \X \to \R$, that is, to estimate
\[
\int_\X \phi d\Pi.
\]
A major motivation is estimating integrals with modern computation to perform Bayesian inference.

The MSC estimator uses independent sample paths from a time-homogeneous Markov chain $(X_t)_{t = 0}^{\infty}$ initialized from a random initial distribution constructed using importance sampling.
To construct the distribution initializing the Markov chains, let $Q$ be a Borel probability measure on $\X$, which must be chosen beforehand, used as an importance sampling proposal.
Define the importance weight $w = d\Pi/dQ$ which is the Radon-Nikodym derivative between $\Pi$ and $Q$.
Denote $\Z_+$ as the strictly positive integers.
For $N \in \Z_+$, let $Y = (Y_1, \ldots, Y_N)^T$ where $Y_j \sim Q$ are independent for $j = 1, \ldots, N$ and define the self-normalized importance weights
\[
w^N_Y(\cdot)
= \frac{ w(\cdot) }{ \sum_{j = 1}^N w(Y_j) }.
\]
For a point $x \in \X$, denote the Dirac probability measure by $\delta_x$.
Define the random initial probability distribution
\[
\Pi^N_Y
= \sum_{i = 1}^N w^N_Y(Y_i) \delta_{Y_i}.
\]
Since $\sum_{i = 1}^N w^N_Y(Y_i) = 1$, this defines a valid probability measure.

For each $x \in \X$, the conditional distribution of $X_t | X_{t - 1} = x$ is defined by a Markov transition kernel $\P$ on $\X \times \B(\X)$ for $t \in \Z_+$ with $\P^1 = \P$ and
\[
\P^t(x, dy) = \int_\X \P(z, dy) \P^{t-1}(x, dz).
\]
By this construction, we will assume $X_t | X_0 = x$ is independent of $Y$ since $\P$ is independent of $Y$.
We will also assume throughout that the Markov kernel $\P$ has a unique invariant measure $\Pi$, that is,
\begin{align}
\Pi \P = \Pi
\label{eq:unique_invariant}
\end{align}
is uniquely satisfied by $\Pi$.
In particular, if $X_0 \sim \Pi$, then $X_t \sim \Pi$ for all $t$ thereafter.
This assumption is satisfied for many useful Markov chains and verifiable conditions to ensure uniqueness for irreducible chains exist \citep[Theorem 10.4.4]{meyn:tweedie:2009}. For example, Metropolis-Hastings and Gibbs samplers often satisfy this assumption and is not restrictive for many practical applications.

We can then define a Markov chain $(X_t)_{t = 0}^{\infty}$ with a random distribution, in the sense of $Y$ being random, using initialization at $X_0 \sim \Pi^N_Y$ with conditional $X_t | X_{t - 1} = x \sim \P(x, \cdot)$ and marginal $X_t \sim \Pi^N_Y \P^t$ defined by the random probability measure
\[
\Pi^N_Y \P^t(dx') = \int_\X \P^t(x, dx') \Pi^N_Y(dx).
\]
With this Markov chain, we sum the sample path of the Markov chain until its first return to a set $C$.
For a set $C \subseteq \X$, define the first return time by $\tau_C = \inf\{ t \ge 1 : X_t \in C \}$.
For functions $\phi : \X \to \R$, define the sum
\begin{align*}
S_{\tau_C}(\phi) 
= \begin{cases}
\sum_{k = 1}^{\tau_C} \phi(X_k) & X_0 \in C \\
0 & X_0 \not\in C.
\end{cases}
\end{align*}
%We can simulate $S_{\tau_C}(\phi)$ by generating $X_0$ from $\Pi^N_Y$ and if $X_0 \in C$, then generate $X_1, \ldots, X_{\tau_C} | X_0$ from the Markov chain and compute $S_{\tau_C}(\phi)$.

Let $M$ denote the the number of independent Markov chains and conditioned on $Y$, let $S^m_{\tau_C}(\phi) | Y$ be independent for $m = 1, \ldots, M$.
The \textit{MSC estimator} is the empirical average over these independent sums of Markov chains, that is, 
\[
\frac{1}{M} \sum_{m = 1}^M S^m_{\tau_C} (\phi).
\]
The algorithm used to simulate the MSC estimator is outlined below in Algorithm~\eqref{algorithm:pmcmc}.

\vspace{.5cm}
\begin{algorithm}[ht]
\caption{MSC estimator}
Simulate independent $Y_i \sim Q$ for $i = 1, \ldots, N$ to construct $\Pi_Y^N$

\For{$m = 1, \ldots, M$}{ 
  Simulate an independent Markov chain path $X^m_0, X_1^m, \ldots X_{\tau_C}^m$ and compute $S^m_{\tau_C}(\phi)$
}
Return $\frac{1}{M} \sum_{m = 1}^M S^m_{\tau_C} (\phi)$.
\label{algorithm:pmcmc}
\end{algorithm}
\vspace{.5cm}

\section{Mean squared error analysis under geometric drift conditions}
\label{section:mse_error_bound}

The main result of this section is developing a non-asymptotic analysis on the mean squared error in Theorem~\ref{theorem:mse_bound} if the importance weights have finite variance and under a geometric drift condition on the Markov chain.
However, some results in this section do not necessarily require the drift to be geometric and we define the drift condition more generally.
We will assume the following general drift condition (see \citep[Chapter 11]{meyn:tweedie:2009}) to control return times of the Markov chain to a specific set.

\begin{assumption}
\label{assumption:drift}
(Drift condition)
Suppose there are functions $V : \X \to [1, \infty)$, $f : \X \to [1, \infty)$, and constants $\gamma \in (0, 1)$ and $K \in (0, \infty)$ such that for each $x \in \X$
\[
(\P V)(x) - V(x)
\le - (1 - \gamma) f(x) + K.
\]
\end{assumption}

Under the drift condition \eqref{assumption:drift}, define the set $C$ for any $R > K / (1 - \gamma)$ by
\begin{align}
C = \{x \in \X : f(x) \le R \}.
\label{eq:set_C}
\end{align}
The drift condition \eqref{assumption:drift} controls the return times to $C$ and in particular, the drift condition \eqref{assumption:drift} is geometric if $f = V$.
In this case, the return time to a large enough sublevel set of the function $V$ can be rapid resulting in a short simulation of the Markov chain paths in Algorithm~\eqref{algorithm:pmcmc}.
We first introduce a conditional measure $\K_C$ for every set $B \subseteq \X$ and every $x \in \X$ by
\begin{align*}
\K_C(x, B)
= \E\left[ S_{\tau_C}(I_B) I_C(x) \bigm| X_0 = x \right].
\end{align*}
Under Theorem~\ref{theorem:representation}, we bound the mean squared bias for possibly unbounded functions dominated by $f$ defined in the drift condition \eqref{assumption:drift}. 
The result is inspired by an importance sampling result \citep[Theorem 2.1]{agapiou:2017}.

\begin{proposition}
\label{proposition:bias}
Assume the drift condition \eqref{assumption:drift} holds, and the set $C$ is defined by \eqref{eq:set_C}.
If $Y_i \sim Q$ for $i = 1, \ldots, N$ are independent and $\sup_{x \in C} V(x) < \infty$,
\[
\sup_{|\phi| \le f}\E\left| 
\int_\X \K_C \phi d\Pi_Y^N - \int_\X \phi d\Pi \right|^2
\le 
\frac{4 A_R^2 \int w d\Pi}{N(1 - \gamma_R )^2} - \frac{2 A_R^2}{N(1 - \gamma_R )^2}
\]
where $\gamma_R = \gamma + K/R$ and
$
A_R = \sup_{x \in C} \left[ V(x) + (\gamma - 1) f(x) \right] + 2 K - 1.
$
\end{proposition}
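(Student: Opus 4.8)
The plan is to use the invariant-measure representation to turn the quantity into the mean squared error of a self-normalized importance sampling estimator, and then to control it by combining a uniform sup-norm bound on $\K_C\phi$ coming from the drift condition with an importance sampling error bound in the spirit of \citep[Theorem 2.1]{agapiou:2017}. First I would apply Theorem~\ref{theorem:representation}, which gives $\int_\X \K_C\phi\,d\Pi = \int_\X \phi\,d\Pi$, so that with $g := \K_C\phi$,
\[
\int_\X \K_C\phi\,d\Pi_Y^N - \int_\X \phi\,d\Pi = \int_\X g\,d\Pi_Y^N - \int_\X g\,d\Pi .
\]
Writing $\int_\X g\,d\Pi_Y^N = \sum_{i=1}^N w_Y^N(Y_i)g(Y_i)$ exhibits the left-hand side as exactly a self-normalized importance sampling estimator of $\int_\X g\,d\Pi$, using $\int w\,dQ = 1$ and $\int g\,w\,dQ = \int g\,d\Pi$. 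The problem thus reduces to bounding the self-normalized importance sampling mean squared error for the single function $g$.

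Next I would bound $\norm{g}_\infty = \norm{\K_C\phi}_\infty$ uniformly over $|\phi|\le f$. Since $(\K_C\phi)(x) = I_C(x)\,\E[S_{\tau_C}(\phi)\mid X_0 = x]$ and $|\phi|\le f$, it suffices to bound $\E[\sum_{k=1}^{\tau_C} f(X_k)\mid X_0=x]$ for $x\in C$. Here I would exploit the sublevel-set structure of $C$: on $C^c$ one has $f>R$, hence $-(1-\gamma)f+K \le -(1-\gamma_R)f$ with $\gamma_R=\gamma+K/R$, which upgrades Assumption~\ref{assumption:drift} to a drift $\P V - V \le -(1-\gamma_R)f + b\,I_C$ that is strictly decreasing off $C$. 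Applying the comparison theorem (\citep[Chapter 11]{meyn:tweedie:2009}) to this refined drift, together with $\sup_{x\in C}V(x)<\infty$ and $V\ge1$, $f\ge1$, should yield the uniform bound $\sup_{|\phi|\le f}\norm{\K_C\phi}_\infty \le A_R/(1-\gamma_R)$, with $A_R$ emerging from $\sup_{x\in C}[V(x)-(1-\gamma)f(x)]$ and the boundary contributions collected into the additive constant $2K-1$.

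Finally, with $b := A_R/(1-\gamma_R)$ and $\rho := \int w\,d\Pi = \int w^2\,dQ$ the second moment of the weights, I would invoke the self-normalized importance sampling mean squared error bound, giving $\E|\int_\X g\,d\Pi_Y^N - \int_\X g\,d\Pi|^2 \le (2b^2/N)(2\rho-1)$, which is precisely $4A_R^2\rho/(N(1-\gamma_R)^2) - 2A_R^2/(N(1-\gamma_R)^2)$. Because the sup-norm bound on $g$ is uniform in $\phi$ and the importance sampling bound depends on $\phi$ only through $\norm{g}_\infty$, taking the supremum over $|\phi|\le f$ preserves the inequality and produces the claimed bound.

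I expect the main obstacle to be the second step: extracting the exact constant $A_R/(1-\gamma_R)$ from the comparison theorem. The delicate points are the conversion $\gamma\to\gamma_R$ that removes the additive $K$ off $C$, the index shift between the occupation sum $\sum_{k=0}^{\tau_C-1}$ and the first-return sum $\sum_{k=1}^{\tau_C}$, and the careful bounding of the boundary terms $V(X_{\tau_C})$ and $f(X_{\tau_C})$ at the return to $C$ so that they combine into $2K-1$ rather than an $R$-dependent quantity; one must also verify the regularity (finite expected return times) justifying the telescoping/optional-stopping step, which the drift condition supplies.
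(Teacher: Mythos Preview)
Your proposal is correct and follows essentially the same route as the paper: reduce to self-normalized importance sampling via Theorem~\ref{theorem:representation}, bound $\sup_{|\phi|\le f}\norm{\K_C\phi}_\infty$ by $A_R/(1-\gamma_R)$ using the drift condition with the $\gamma\to\gamma_R$ conversion, and then apply the importance sampling mean squared error bound. The paper packages your second step as its Lemma~\ref{lemma:drift_martingale_ub}, proved by an explicit supermartingale plus optional stopping argument (rather than citing the comparison theorem directly), and for the third step it reproves the self-normalized bound via the Cauchy--Schwarz split with parameter $\alpha=1$ to land exactly on $(2b^2/N)(2\rho-1)$; the obstacles you flag---the $\gamma\to\gamma_R$ shift, the index offset in the return sum, and the boundary contributions yielding $2K-1$---are precisely the points handled in that lemma.
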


An interesting property of the mean squared bias in Proposition~\ref{proposition:bias} is that it holds under relatively mild conditions such as subgeometric drift conditions.
We can also see limitations of the upper bound if $V$ is unbounded on $C$.
We now bound the variance conditioned on the importance samples $Y$.
Here we will require a geometric drift condition \eqref{assumption:drift} where $f = V$ and a slightly more restrictive class of functions $|\phi| \le \sqrt{V}$.

\begin{proposition}
\label{proposition:variance}
Assume the geometric drift condition \eqref{assumption:drift} with $f = V$ holds and the set $C$ is defined by \eqref{eq:set_C}.
If $S^m_{\tau_C}(\phi) | Y$ are independent for $m = 1, \ldots, M$, then
\[
\sup_{|\phi| \le \sqrt{V}} \E\left[ \left| \frac{1}{M} \sum_{m = 1}^M S^m_{\tau_C}(\phi) - \int \K_C \phi d\Pi^N_Y \right|^2 \right]
\le \frac{[R + K]}{M} \left[ \frac{ \gamma_R/2 }{ 1 - \gamma_R/2 } \right]^2.
\]
\end{proposition}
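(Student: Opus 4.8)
The plan is to reduce the claim to the variance of a single return-time sum and then control that variance through the drift, but crucially while averaging over the importance-sampling start rather than over a worst-case initial point. First I would use the conditional independence: given $Y$, the $M$ summands $S^m_{\tau_C}(\phi)$ are i.i.d.\ with common mean $\E[S_{\tau_C}(\phi)\mid Y]$, and this mean is exactly the centering term. Indeed, since $S_{\tau_C}(\phi)=0$ when $X_0\notin C$ and $\K_C\phi(x)=I_C(x)\,\E_x[S_{\tau_C}(\phi)]$ by the definition of $\K_C$ extended linearly from indicators, one gets $\int_\X \K_C\phi\,d\Pi^N_Y=\E[S_{\tau_C}(\phi)\mid Y]$. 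Hence the left-hand side equals $\tfrac1M\,\E[\Var(S_{\tau_C}(\phi)\mid Y)]$, and the task becomes bounding the variance of one cycle sum, started from $X_0\sim\Pi^N_Y$, by $(R+K)[\tfrac{\gamma_R/2}{1-\gamma_R/2}]^2$.

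Next I would extract the two facts the drift supplies. On $C^c$ the sublevel definition \eqref{eq:set_C} upgrades the additive drift to a multiplicative one: for $x\notin C$, $V(x)>R$ gives $(\P V)(x)\le\gamma V(x)+K\le(\gamma+K/R)V(x)=\gamma_R V(x)$. Iterating this inequality along $C$-avoiding paths yields geometrically small taboo moments $\E_x[V(X_k)\,I_{\{\tau_C\ge k\}}]\le(R+K)\gamma_R^{\,k}$, and, after passing to $\sqrt V$ via Jensen ($\P\sqrt V\le\sqrt{\P V}$) since $|\phi|\le\sqrt V$, the partial sums $\sum_k\sqrt{V(X_k)}$ inherit a geometric bound whose squared summation $(\sum_{k\ge1}r^k)^2=(\tfrac{r}{1-r})^2$ matches the target shape. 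Independently, integrating the drift against the invariant law gives the scale-setting estimate $\int V\,d\Pi\le K/(1-\gamma)<R$, which is the drift-only input behind the $R+K$ prefactor and needs no knowledge of $\Pi$ or $Q$.

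The hard part will be combining these without passing to a worst-case start: a uniform-in-$x$ bound is genuinely too weak, since one can exhibit drift-admissible chains with $\sup_{x\in C}\Var_x(S_{\tau_C}(\phi))$ strictly larger than the claimed quantity, so the gain must come from averaging the single-cycle variance over $\Pi^N_Y\approx\Pi$ together with the cancellation in $\Var=\E[S^2]-(\E S)^2$ produced by the centering. I would formalize this by decomposing the centered cycle sum as a martingale built from the Poisson function $G(x)=\E_x[S_{\tau_C}(\phi)]$, expressing the variance as a $\Pi$-average of one-step conditional variances of $\phi+G\,I_{C^c}$, and using the representation of $\Pi$ in Theorem~\ref{theorem:representation} to turn cycle averages into $\Pi$-integrals controlled by $\int V\,d\Pi$. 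The delicate, and main, step is tracking the constants through this representation and the geometric summation so that the ratio sharpens from the naive $\sqrt{\gamma_R}$ to $\gamma_R/2$; the reductions from $\Pi^N_Y$ and from general $\phi$ to $\sqrt V$ are routine by comparison.
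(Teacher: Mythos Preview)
Your reduction to a single-chain second moment and your taboo iterate $\E_x[V(X_k)I_{\tau_C\ge k}]\le(V(x)+K)\gamma_R^{\,k}$ are exactly right and match the paper. But the diagnosis in your third paragraph is wrong: a uniform-in-$x$ bound over $C$ is \emph{not} too weak, and that is precisely what the paper does. The expression ``$\gamma_R/2$'' in the statement is a typo for $\gamma_R^{1/2}=\sqrt{\gamma_R}$; the paper's own derivation produces $\sum_{k\ge1}\gamma_R^{k/2}=\sqrt{\gamma_R}/(1-\sqrt{\gamma_R})$ and then records it with the same typo. So the ``naive $\sqrt{\gamma_R}$'' you mention is in fact the target, and no sharpening via averaging over $\Pi$, cancellation from the centering, or a Poisson-equation martingale is needed. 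Relatedly, the prefactor $R+K$ does not come from $\int V\,d\Pi$; it comes directly from $V(X_0)+K\le R+K$ for $X_0\in C$.

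The step you are missing is much more elementary than what you propose. Expand the square as a double sum and apply Cauchy--Schwarz termwise,
\[
\E_x\!\left[\Big(\sum_{k\ge1}\phi(X_k)I_{\tau_C\ge k}\Big)^{2}\right]
\le\sum_{k,\ell\ge1}\sqrt{\E_x\big[\phi(X_k)^2I_{\tau_C\ge k}\big]}\,\sqrt{\E_x\big[\phi(X_\ell)^2I_{\tau_C\ge\ell}\big]},
\]
then use $\phi^2\le V$ together with your taboo iterate to bound each factor by $(V(x)+K)^{1/2}\gamma_R^{k/2}$, and sum the geometric series. For $x\in C$ this gives $\E_x[S_{\tau_C}(\phi)^2]\le(R+K)\bigl(\sqrt{\gamma_R}/(1-\sqrt{\gamma_R})\bigr)^{2}$, and averaging over $X_0\sim\Pi^N_Y$ and then $Y$ finishes. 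There is no Jensen step on $\sqrt V$, no appeal to Theorem~\ref{theorem:representation}, and the paper bounds the raw second moment rather than the variance. Your martingale route might in principle yield a genuinely smaller constant, but it is not what is being asserted, and it would also require care since $\Pi^N_Y$ is not $\Pi$ and the stated bound has no $N$-dependence.
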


Intuitively, the upper bound on the variance will increase if the radius $R$ of the set $C$ increases, which is a sublevel set of $V$.
We may now combine the results on the mean squared bias and variance to prove an upper bound on the mean squared error.

\begin{theorem}
\label{theorem:mse_bound}
Assume the a geometric drift condition \eqref{assumption:drift} with $f = V$ holds, and the set $C$ is defined by \eqref{eq:set_C}.
If $Y_i$ are independent for $i = 1, \ldots, N$ and $S^m_{\tau_C}(\phi) | Y$ are independent for $m = 1, \ldots, M$, then
\begin{align*}
\sup_{|\phi| \le \sqrt{V}} \E\left[ 
\left| 
\frac{1}{M} \sum_{m = 1}^M S^m_{\tau_C}(\phi) - \int_\X \phi d\Pi 
\right|^2
\right]
&\le \left(
\frac{\gamma_R \sqrt{R + K}}{ \sqrt{M} (2 - \gamma_R) }
+ \frac{ 2 A_R \sqrt{\int w d\Pi} }{\sqrt{ N } (1 - \gamma_R )}
\right)^2.
\end{align*}
\end{theorem}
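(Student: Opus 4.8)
The plan is to perform a bias--variance split in $L^2(\Prob)$ and combine Proposition~\ref{proposition:bias} with Proposition~\ref{proposition:variance} through the triangle inequality. Fix $\phi$ with $|\phi| \le \sqrt{V}$. Under the geometric drift condition $f = V \ge 1$, so $|\phi| \le \sqrt{V} \le V = f$, which means Proposition~\ref{proposition:bias} applies to $\phi$; and $|\phi| \le \sqrt{V}$ is exactly the hypothesis of Proposition~\ref{proposition:variance}, so both propositions are available on the same function class. Write $\hat\mu = \frac{1}{M}\sum_{m=1}^M S^m_{\tau_C}(\phi)$ for the estimator, $\mu = \int_\X \phi \, d\Pi$ for the target, and introduce the intermediate quantity $\bar\mu = \int_\X \K_C \phi \, d\Pi_Y^N$, which by the definition of $\K_C$ is the conditional mean of each $S^m_{\tau_C}(\phi)$ given $Y$.

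First I would decompose $\hat\mu - \mu = (\hat\mu - \bar\mu) + (\bar\mu - \mu)$ and apply Minkowski's inequality in $L^2(\Prob)$, giving
\[
\left( \E|\hat\mu - \mu|^2 \right)^{1/2} \le \left( \E|\hat\mu - \bar\mu|^2 \right)^{1/2} + \left( \E|\bar\mu - \mu|^2 \right)^{1/2}.
\]
The first term on the right is the conditional-variance term bounded by Proposition~\ref{proposition:variance}; using the identity $\tfrac{\gamma_R/2}{1 - \gamma_R/2} = \tfrac{\gamma_R}{2-\gamma_R}$ its square root is at most $a := \gamma_R \sqrt{R+K}\,/\,[\sqrt{M}(2-\gamma_R)]$. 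The second term is the mean-squared-bias term bounded by Proposition~\ref{proposition:bias}; discarding the subtracted nonnegative quantity $2 A_R^2 / [N(1-\gamma_R)^2]$ weakens the bias bound to $4 A_R^2 \int w\, d\Pi /[N(1-\gamma_R)^2]$, whose square root is $b := 2 A_R \sqrt{\int w\, d\Pi}\,/\,[\sqrt{N}(1-\gamma_R)]$. Integrability of the two pieces is automatic since the propositions already furnish finite second moments.

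Finally I would take the supremum over $|\phi| \le \sqrt{V}$. Because $a$ and $b$ do not depend on $\phi$, subadditivity of the supremum keeps the right-hand side equal to $a + b$, so $\sup_{|\phi|\le\sqrt V}\left(\E|\hat\mu-\mu|^2\right)^{1/2} \le a + b$. Squaring this inequality, and using that squaring is monotone on the nonnegative reals so that $\sup_\phi \E|\hat\mu - \mu|^2 = \left(\sup_\phi (\E|\hat\mu - \mu|^2)^{1/2}\right)^2$, yields exactly
\[
\sup_{|\phi| \le \sqrt{V}} \E|\hat\mu - \mu|^2 \le (a + b)^2,
\]
which is the claimed bound.

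There is no serious obstacle here: the result is a routine assembly once the correct intermediate target $\bar\mu$ is identified. The only points needing care are checking $\sqrt{V} \le V$ so that both propositions cover the class $|\phi| \le \sqrt{V}$, confirming that dropping the negative term in Proposition~\ref{proposition:bias} only relaxes the bound, and handling the supremum via subadditivity rather than interchanging it with the expectation. Invoking Minkowski's inequality directly, rather than expanding the square and arguing that the cross term vanishes by conditioning on $Y$, matches the stated form $(a+b)^2$ and sidesteps a separate conditional-orthogonality argument.
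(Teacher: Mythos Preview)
Your proposal is correct and matches the paper's proof in substance: both split at $\bar\mu = \int_\X \K_C\phi\, d\Pi_Y^N$, verify $|\phi|\le\sqrt V\le V=f$ (with $\sup_{x\in C}V(x)\le R$ automatic from $C=\{V\le R\}$) so that Propositions~\ref{proposition:bias} and~\ref{proposition:variance} apply, and combine the two bounds. The only cosmetic difference is that the paper writes the combination as $(1+\alpha)\E|\hat\mu-\bar\mu|^2+(1+\alpha^{-1})\E|\bar\mu-\mu|^2$ and optimizes over $\alpha$, whereas you invoke Minkowski's inequality directly; since the optimal $\alpha$ gives exactly $(a+b)^2$, the two routes are equivalent.
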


The upper bound in Theorem~\ref{theorem:mse_bound} is a simple combination of Proposition~\ref{proposition:bias} and \ref{proposition:variance}.
Interestingly, the mean squared error bound holds for any sublevel set of $V$ defined in \eqref{eq:set_C} so long as $R$ is large enough.
To enforce shorter Markov chain path lengths, and hence short simulation lengths, one may choose a large set $C$.
However, this upper bound illustrates the effect of choosing a larger set $C$ as the number of independent chains will then need to increase.

An application of Theorem~\ref{theorem:mse_bound} and Markov's inequality yields an error analysis that can be useful for assessing the reliability of estimation in Bayesian inference and is demonstrated in applications in Section~\ref{section:pg_application}.
Let $\norm{\cdot}_p$ for $p \ge 1$ denote the standard p-norms.
Roughly speaking, the main application in mind is in Euclidean space $\R^d$ where $\norm{x}_2 \le \sqrt{V(x)}$.
Then if the variance of the importance weights is well-behaved, we have an explicit requirement on $N$ and $M$ so it is guaranteed with high probability, for any $|\phi(x)| \le \norm{x}_2$, the MSC estimator
\[
\frac{1}{M} \sum_{m = 1}^M S^m_{\tau_C}(\phi)
\approx \int_{\R^d} \phi d\Pi.
\]
This is of practical importance in Bayesian estimation where $\int_{\R^d} x \Pi(dx)$ corresponds to the posterior mean and the error analysis developed here may be applied coordinate-wise.

Alternative upper bounds to Theorem~\ref{theorem:mse_bound} can be developed as follows.
If the ratio satisfies $N / M \ge [\gamma R + 2K] \int w d\Pi$, then we can obtain the following bound by optimizing $\alpha$ below to get
\begin{align*}
\E\left[ \left| \frac{1}{M} \sum_{m = 1}^{M} S_{\tau_C}^m(\phi)
- \int_\X \phi d\Pi \right|^2 \right]
&\le (1 + \alpha) \frac{ [R + K] (\gamma_R / 2)^2 }{M (1 - \gamma_R/2)^2 }
+ \frac{
(1 + \alpha^{-1}) 4 [\gamma R + 2K] 
}{
M (1 - \gamma_R)^2 
}
\\
&\le 
\frac{ \gamma_R R + 2 K}{M (1 - \gamma_R)^2 }
\left\{ 
(1 + \alpha) \frac{1}{4}
+
(1 + \alpha^{-1}) 4 
\right\}
\\
&\le \frac{6.25 \gamma_R R + 12.5 K}{M (1 - \gamma_R)^2 }.
\end{align*} 
Here the scaling of $M$ is more clear with respect to $R, K, \gamma$, but the ratio $N/M$ plays a part in this simplified estimate. However, this can be estimated with importance sampling in practice to ensure such a requirement holds.

\section{Concentration results under multiplicative drift conditions}
\label{section:concentration_mult}

The goal of this section is to improve the error analysis in the previous section but introducing stronger conditions.
We look at a stronger multiplicative drift condition \eqref{assumption:drift} that is used for large deviation theory in Markov chains \citep{kontoyiannis:meyn:2005} and has received attention recently \citep{devraj:etal:2020}.   

\begin{assumption}
\label{assumption:drift_mult}
(Multiplicative drift condition)
Let $V : \X \to [0, \infty)$ and $f : \X \to [0, \infty)$.
Suppose for some $\gamma \in (0, 1)$ and $K \in [0, \infty)$ that for each $x \in \X$,
\[
\log\{ [ \P( \exp(V) ] (x) \}
\le V(x) - (1 - \gamma) f(x) + K.
\]
\end{assumption}

We first have a sub-Gaussian \citep{hoeffding:1963} concentration bound for the bias when the importance weights are uniformly bounded.
In many applications, assuming the importance weights are bounded is unreasonable, but if this is condition holds, the error analysis provided in this section can produce sharper results.

\begin{proposition}
\label{proposition:bias_bounded}
Assume the multiplicative drift condition \eqref{assumption:drift_mult} holds, and the set $C$ is defined by \eqref{eq:set_C}.
Suppose $\sup_{x \in C} V(x) < \infty$ and suppose $\sup_{x \in \X} w(x)  = w_* < \infty$.
If $Y_i \sim Q$ for $i = 1, \ldots, N$ are independent, then for any $\e \in (0, 1)$,
\[
\sup_{|\phi| \le f}\Prob\left( 
\left| 
\int_\X \K_C \phi d\Pi^N_Y
- \int_\X \phi d\Pi
\right|
\ge \e
\right)
\le 
4 \exp\left( 
-\frac{N \e^2 (1 - \gamma_R)^2}{2 \exp[2 B_R] w_*^2 }
\right)
\]
where $\gamma_R = \gamma + K/R$
and $B_R = \sup_{x \in C} \left[ V(x) - (1 - \gamma)f(x) \right] + 2K$.
\end{proposition}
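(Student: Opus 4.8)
The plan is to reduce the stated deviation to a concentration inequality for a self-normalized importance sampling estimator of a single bounded function, and then to control that function uniformly over the class $\{|\phi| \le f\}$ using the multiplicative drift.

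First I would invoke the invariant-measure representation (Theorem~\ref{theorem:representation}) to replace $\int_\X \phi\, d\Pi$ by $\int_\X \K_C \phi\, d\Pi$, so that the quantity inside the probability is the error
\[
\int_\X \K_C\phi\, d\Pi_Y^N - \int_\X \K_C \phi\, d\Pi
\]
of the self-normalized importance sampling estimator $\sum_{i=1}^N w_Y^N(Y_i)\,\K_C\phi(Y_i)$ of the function $g := \K_C\phi$. Writing this as the ratio $\left(N^{-1}\sum_i w(Y_i)g(Y_i)\right)/\left(N^{-1}\sum_i w(Y_i)\right)$ and noting that $\E_Q[w(Y)g(Y)] = \int g\, d\Pi$ and $\E_Q[w(Y)] = 1$, the error is a ratio of averages of i.i.d.\ bounded terms with numerator mean $\int g\, d\Pi$ and denominator mean $1$.

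The central step, and the one I expect to be the main obstacle, is the uniform bound $\sup_{|\phi| \le f} \|g\|_\infty = \sup_{x \in C} \K_C f(x) \le \exp[B_R]/(1-\gamma_R)$. Since $\K_C\phi(x) = I_C(x)\,\E_x[\sum_{k=1}^{\tau_C}\phi(X_k)]$ vanishes off $C$ and $|\phi|\le f$, it suffices to bound $\E_x[\sum_{k=1}^{\tau_C} f(X_k)]$ for $x \in C$, and here I would exploit the multiplicative drift \eqref{assumption:drift_mult}. The process $M_t = \exp(V(X_t))\exp\!\left(\sum_{k=0}^{t-1}[(1-\gamma)f(X_k)-K]\right)$ is a supermartingale, because $\E[\exp(V(X_{t+1}))\mid\F_t] = \P(\exp V)(X_t) \le \exp(V(X_t)-(1-\gamma)f(X_t)+K)$. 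Optional stopping at $\tau_C$, together with $\exp(V(X_{\tau_C}))\ge 1$, gives $\E_x\!\left[\exp\!\left(\sum_{k=0}^{\tau_C-1}[(1-\gamma)f(X_k)-K]\right)\right] \le \exp(V(x))$; isolating the deterministic $k=0$ term then yields $\E_x\!\left[\exp\!\left(\sum_{k=1}^{\tau_C-1}[(1-\gamma)f(X_k)-K]\right)\right]\le \exp(B_R-K)$. On the excursion $1 \le k \le \tau_C-1$ we have $X_k \notin C$, so each bracket exceeds $(1-\gamma)R - K = (1-\gamma_R)R > 0$; applying $e^y \ge 1+y$ to this nonnegative exponent converts the exponential-moment bound into a first-moment bound on $\sum_{k=1}^{\tau_C-1} f(X_k)$, and the identity $1 + K/[(1-\gamma_R)R] = (1-\gamma)/(1-\gamma_R)$ collapses the constant to the stated form. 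The delicate points are justifying optional stopping (finiteness of $\tau_C$ and integrability, both supplied by the drift) and the bookkeeping of the terminal term that keeps the final constant clean.

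Finally, with $|g(Y_i)| \le G := \exp[B_R]/(1-\gamma_R)$ and $w(Y_i) \le w_*$, the summands $w(Y_i)g(Y_i)$ lie in an interval of length $2w_*G$ and the $w(Y_i)$ in one of length $w_*$, so both averages are sub-Gaussian by Hoeffding's inequality \citep{hoeffding:1963}. I would bound the event $\{|\int_\X \K_C\phi\,d\Pi_Y^N - \int_\X \K_C\phi\,d\Pi| \ge \e\}$ by the deviation of the numerator around $\int g\, d\Pi$ and of the denominator around $1$, using $|\int g\, d\Pi| \le G$ to combine them and a union bound, which produces the factor $4$; allocating $\e$ between the two deviations and simplifying gives the exponent $-N\e^2(1-\gamma_R)^2/(2\exp[2B_R]w_*^2)$. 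Because the bound on $G$ is uniform in $\phi$, the supremum over $|\phi|\le f$ passes through unchanged, yielding the claim.
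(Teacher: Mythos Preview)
Your high-level strategy is exactly the paper's: invoke the representation $\int \K_C\phi\,d\Pi=\int\phi\,d\Pi$, bound $G:=\sup_{x\in C}(\K_C f)(x)$ via a supermartingale built from the multiplicative drift together with $1+y\le e^y$, then split the self-normalized error into a numerator and a denominator deviation and apply Hoeffding twice with a union bound to get the factor $4$. (One small correction: the representation you want here is Theorem~\ref{theorem:multi_representation}, the multiplicative-drift version; Theorem~\ref{theorem:representation} is stated under Assumption~\ref{assumption:drift} with $V,f\ge 1$.)

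The place where your argument does not close is the supermartingale you chose. With $M_t=\exp\big(V(X_t)+\sum_{k=0}^{t-1}[(1-\gamma)f(X_k)-K]\big)$ and optional stopping at $\tau_C$, you control $\sum_{k=1}^{\tau_C-1}f(X_k)$ and must then add back the terminal term $f(X_{\tau_C})\le R$. The identity $1+K/[(1-\gamma_R)R]=(1-\gamma)/(1-\gamma_R)$ converts the $(1-\gamma)$ factor to $(1-\gamma_R)$ on the excursion, but it does \emph{not} absorb the $+R$: your route gives
\[
(1-\gamma_R)\,\K_C f(x)\ \le\ e^{B_R-K}-1+(1-\gamma_R)R,
\]
and this is not in general $\le e^{B_R}$ (take $B_R-K$ very negative and $(1-\gamma)R-K$ large). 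So you obtain a valid concentration bound, but not with the stated exponent.

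The paper's fix is to first rewrite the drift as $\log[\P e^V](x)\le V(x)-(1-\gamma_R)f(x)+K\,I_C(x)$ (this is where the set $C$ enters) and use the supermartingale
\[
M_{n+1}=\exp\Big(V(X_{n+1})+(1-\gamma_R)\sum_{k=1}^{n}f(X_k)-K\sum_{k=1}^{n}I_C(X_k)\Big),
\]
which is Lemma~\ref{lemma:mult_drift_martingale_ub}. Because $I_C(X_k)=0$ for $1\le k<\tau_C$ and $I_C(X_{\tau_C})=1$, optional stopping yields directly
\[
\E_x\Big[\exp\Big((1-\gamma_R)\sum_{k=1}^{\tau_C}f(X_k)\Big)\Big]\le \exp\big(V(x)-(1-\gamma)f(x)+2K\big)\le e^{B_R},
\]
so $1+y\le e^y$ gives the clean bound $(1-\gamma_R)\K_C f(x)\le e^{B_R}$ with the terminal step already accounted for, and the stated exponent $-N\e^2(1-\gamma_R)^2/\big(2\,e^{2B_R}w_*^2\big)$ follows.
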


Next, we obtain a sub-Gaussian concentration inequality under the multiplicative drift condition. 

\begin{proposition}
\label{proposition:chernoff}
Assume the multiplicative drift condition \eqref{assumption:drift_mult} holds and the set $C$ is defined by \eqref{eq:set_C}.
Assume $\sup_{x \in C} V(x) < \infty$.
If $S^m_{\tau_C}(\phi) | Y$ are independent for $m = 1, \ldots, M$, then for any $\e \in (0, 1)$,
\[
\sup_{|\phi| \le f} \Prob\left[ \left| \frac{1}{M} \sum_{m = 1}^M S^m_{\tau_C}(\phi) - \int \K_C \phi d\Pi^N_Y \right| \ge \e \right]
\le 2 \exp\left[ -\frac{M \e^2 (1 - \gamma_R)^2}{ 9 \exp(2 B_R))}
\right].
\]
\end{proposition}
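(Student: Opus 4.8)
The plan is to prove a conditional (given $Y$) exponential-moment bound for the empirical average of the $M$ summands $S^m_{\tau_C}(\phi)$, which are i.i.d. given $Y$, and then observe that the resulting estimate is uniform in $Y$. First I would record that, by the definition of $\K_C$ and since $S_{\tau_C}(\phi)$ vanishes when $X_0\notin C$, the conditional mean is exactly $\E[S^m_{\tau_C}(\phi)\mid Y]=\int\K_C\phi\,d\Pi^N_Y=:\mu_Y$, so the event in the statement is a deviation of the average from its own conditional mean. For the upper tail I would apply the exponential Markov inequality: for $\theta>0$,
\[
\Prob\!\left[\tfrac1M\sum_{m} S^m_{\tau_C}(\phi)-\mu_Y\ge\e \,\Big|\,Y\right]\le e^{-\theta M\e}\left(\E\!\left[e^{\theta(S_{\tau_C}(\phi)-\mu_Y)}\mid Y\right]\right)^{M},
\]
using conditional independence, and handle the lower tail symmetrically with $-\theta$, which contributes the factor $2$.

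The crux is bounding the per-chain exponential moment through Assumption~\ref{assumption:drift_mult}. Since $\theta\phi\le\theta f$ for $\theta>0$ and $|\phi|\le f$, it suffices to control $\E_x[e^{\theta S_{\tau_C}(f)}]$ for $x\in C$ (points with $x\notin C$ contribute $1$ after averaging over $\Pi^N_Y$). Here I would first use Jensen's inequality on $u\mapsto u^{s}$ to transfer the multiplicative drift to $sV$ for every $s\in(0,1]$, giving $\P(e^{sV})\le e^{sV-s(1-\gamma)f+sK}$, and then build the nonnegative supermartingale
\[
N^{(s)}_t=\exp\!\Big(sV(X_t)+s(1-\gamma)\sum_{k=0}^{t-1}f(X_k)-sKt\Big).
\]
Optional stopping at $\tau_C$ (applied to $\tau_C\wedge n$ and passed to the limit by Fatou, using that the drift forces $\tau_C<\infty$ almost surely) yields $\E_x[N^{(s)}_{\tau_C}]\le e^{sV(x)}$. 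I would then exploit the geometry of the excursion: $f(X_k)>R$ for $1\le k\le\tau_C-1$ gives $K\tau_C\le K+\tfrac{K}{R}\sum_{k=0}^{\tau_C-1}f(X_k)$, which absorbs the $-sK\tau_C$ term as a loss of rate and replaces $\gamma$ by $\gamma_R=\gamma+K/R$. Using $V\ge0$, $f(X_{\tau_C})\le R$, and $\sup_{x\in C}V(x)<\infty$ to fold the boundary terms into $B_R$, and setting $\theta=s(1-\gamma_R)$, this produces an exponential-moment bound $\E_x[e^{\theta S_{\tau_C}(f)}]\le\exp(\theta a+r(\theta))$ valid for $\theta\in(0,1-\gamma_R]$, uniformly over $x\in C$ and hence over $Y$.

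Having a one-sided exponential moment that is finite up to $\theta=1-\gamma_R$, I would recognize $S_{\tau_C}(f)$ as sub-gamma and convert the uncentered estimate into a centered Bernstein-type bound $\E[e^{\theta(S_{\tau_C}(\phi)-\mu_Y)}\mid Y]\le\exp\!\big(\tfrac{\theta^2\nu}{2(1-c\theta)}\big)$ with a variance proxy $\nu$ and scale $c$ controlled by $\exp(2B_R)$ and $(1-\gamma_R)^{-2}$. Substituting into the Chernoff bound and optimizing over $\theta$ gives a sub-gamma tail; finally, restricting to $\e\in(0,1)$ lets me absorb the sub-exponential correction $c\e$ into the variance, collapsing the two regimes into the single sub-Gaussian form with the constant $9$ in the denominator.

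I expect the main obstacle to be the centering step: the multiplicative drift directly controls only the one-sided, uncentered exponential moment of $\sum f$, so care is needed to pass to a bound on the centered moment-generating function of $S_{\tau_C}(\phi)$ with the correct variance proxy, in particular keeping track of the mean $\mu_Y$ and the boundary contributions so that they consolidate cleanly into $B_R$ and the universal constant. A secondary technical point is justifying optional stopping for the unbounded random length $\tau_C$, which I would handle by the standard truncation-and-Fatou argument noted above.
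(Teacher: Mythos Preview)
Your skeleton---conditional Chernoff, i.i.d.\ factorization given $Y$, and control of the single-chain exponential moment via a supermartingale built from the multiplicative drift---is exactly the paper's. Your $N^{(s)}$ construction (Jensen to scale, optional stopping with truncation and Fatou, then absorbing $K\tau_C$ via the excursion inequality $f(X_k)>R$ off $C$) is a mild variant of the paper's Lemma~\ref{lemma:mult_drift_martingale_ub}, which instead rewrites the drift as $\log[e^{-V}\P e^{V}]\le -(1-\gamma_R)f+KI_C$ up front and works only at $s=1$; both routes deliver the uniform estimate $\E_x\!\big[\exp\big((1-\gamma_R)S_{\tau_C}(f)\big)\big]\le e^{B_R}$ for $x\in C$.

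Where you diverge is the centering, and this is where the paper's argument actually lives. Your plan---deduce a sub-gamma/Bernstein bound for the centered variable from the uncentered exponential moment---is not automatic: the supermartingale yields only a \emph{linear} estimate $\log\E[e^{\theta S_{\tau_C}(f)}]\le \mathrm{const}\cdot\theta$ on $(0,1-\gamma_R]$, which is not a quadratic cumulant bound, so extracting a centered $\exp\!\big(\tfrac{\theta^2\nu}{2(1-c\theta)}\big)$ still requires a second-order device (moment-factorial estimates or a Taylor remainder), and tracking constants through that route to land exactly on $9\exp(2B_R)$ would be delicate. The paper bypasses the sub-gamma detour entirely. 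It expands $e^{\lambda Z}$ with $Z=S_{\tau_C}(\phi)-\mu_Y$ to second order so the linear term vanishes, bounds the integral remainder by $\exp\big((1+\lambda)|Z|\big)$ using $x^2/2+1\le e^{x}$, and---this is the key device---first restricts to $|\phi|\le(1-\gamma_R)f/(1+\lambda)$ so that $(1+\lambda)|Z|\le(1-\gamma_R)\big(S_{\tau_C}(f)+\int\K_C f\,d\Pi^N_Y\big)$; Lemma~\ref{lemma:mult_drift_martingale_ub} (plus Jensen for the mean term) then gives the clean sub-Gaussian bound $\E[e^{\lambda Z}\mid Y]\le 1+\lambda^2 e^{2B_R}\le\exp(\lambda^2 e^{2B_R})$ directly. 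Optimizing $\lambda$ and undoing the rescaling with $\lambda\le 1/2$ (this is where $\e<1$ is used) produces $(1+\lambda)^2\le 9/4$ and hence the constant $9$. In short, the ``main obstacle'' you flag is precisely the step the paper handles with a concrete trick, and your abstract sub-gamma conversion would ultimately have to implement something equivalent.
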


We now have the main result of this section developing a concentration inequality in Theorem~\ref{theorem:mult_error_bound} under the stronger multiplicative drift condition and uniformly bounded importance weights.

\begin{theorem}
\label{theorem:mult_error_bound}
Assume the drift condition \eqref{assumption:drift_mult} holds, and the set $C$ is defined by \eqref{eq:set_C}. 
Suppose $\sup_{x \in C} V(x) < \infty$ and $\sup_{x \in \X} w(x)  = w_* < \infty$.
If $Y_i$ are independent for $i = 1, \ldots, N$ and $S^m_{\tau_C}(\phi) | Y$ are independent for $m = 1, \ldots, M$,
then for any $\e \in (0, 1)$, 
\begin{align*}
\sup_{|\phi| \le f} \Prob\left( \left| \frac{1}{M} \sum_{m = 1}^{M} S_{\tau_C}^m(\phi)
- \int_\X \phi d\Pi \right| \ge \e \right)
\le 6 \exp\left( 
-\frac{\e^2 (1 - \gamma_R)^2 \min\left\{ 2 M / 9, N / w_*^2 \right\} }{ 8 \exp(2 B_R) }
\right).
\end{align*}
\end{theorem}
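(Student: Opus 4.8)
The plan is to decompose the total error around the shared center $\int \K_C \phi \, d\Pi^N_Y$ and then invoke the two concentration bounds already in hand: Proposition~\ref{proposition:bias_bounded} for the sampling error in $Y$ and Proposition~\ref{proposition:chernoff} for the Monte Carlo fluctuation across the chains conditioned on $Y$. The only genuinely new ingredient is combining them, which is a union bound followed by a reconciliation of the two exponents.

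First I would apply the triangle inequality with the common center $\int \K_C \phi \, d\Pi^N_Y$:
\[
\left| \frac{1}{M}\sum_{m=1}^M S^m_{\tau_C}(\phi) - \int_\X \phi \, d\Pi \right|
\le
\left| \frac{1}{M}\sum_{m=1}^M S^m_{\tau_C}(\phi) - \int \K_C\phi \, d\Pi^N_Y \right|
+
\left| \int \K_C\phi \, d\Pi^N_Y - \int_\X \phi \, d\Pi \right|.
\]
The first summand is the fluctuation term bounded in Proposition~\ref{proposition:chernoff} and the second is the bias term bounded in Proposition~\ref{proposition:bias_bounded}. Since the event that the left side is at least $\e$ is contained in the union of the events that one of the two summands is at least $\e/2$, a union bound gives
\[
\sup_{|\phi| \le f}\Prob\left( \left| \frac{1}{M}\sum_{m=1}^M S^m_{\tau_C}(\phi) - \int_\X \phi \, d\Pi \right| \ge \e \right)
\le
\Prob\left( \text{fluctuation} \ge \e/2 \right)
+
\Prob\left( \text{bias} \ge \e/2 \right).
\]

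Next I would handle the two sources of randomness separately. For the fluctuation term, because the bound in Proposition~\ref{proposition:chernoff} is uniform in $Y$, I would pass from the conditional-on-$Y$ statement to the unconditional one by the tower property and apply it at level $\e/2$, obtaining $2\exp[-M\e^2(1-\gamma_R)^2/(36\exp(2B_R))]$. For the bias term I would apply Proposition~\ref{proposition:bias_bounded} directly at level $\e/2$, obtaining $4\exp[-N\e^2(1-\gamma_R)^2/(8\exp(2B_R)w_*^2)]$. The final step rewrites both exponents with the common factor $\e^2(1-\gamma_R)^2/(8\exp(2B_R))$: the fluctuation exponent equals this factor times $2M/9$, and the bias exponent equals this factor times $N/w_*^2$. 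Bounding each exponential from above by the one carrying $\min\{2M/9,\, N/w_*^2\}$ and adding the prefactors $2+4=6$ yields the stated bound.

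The main obstacle, modest as it is, is bookkeeping: keeping the $Y$-randomness (governing the bias) and the chain-randomness (governing the fluctuation) correctly separated through the conditioning argument, and matching the two denominators so that $8\exp(2B_R)$ factors out cleanly while the chain count and sample count collapse into the single minimum. Both are routine once the decomposition above and the uniform-in-$Y$ nature of Proposition~\ref{proposition:chernoff} are made explicit.
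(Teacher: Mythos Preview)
Your proposal is correct and follows essentially the same route as the paper: a triangle-inequality split around $\int \K_C\phi\,d\Pi^N_Y$, a union bound at level $\e/2$, and then invocation of Propositions~\ref{proposition:bias_bounded} and~\ref{proposition:chernoff}. Your explicit reconciliation of the two exponents into the common factor $\e^2(1-\gamma_R)^2/(8\exp(2B_R))$ times $2M/9$ and $N/w_*^2$, respectively, is exactly the bookkeeping the paper leaves implicit; note only that Proposition~\ref{proposition:chernoff} is already stated unconditionally in $Y$, so the tower-property step you mention is unnecessary.
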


\section{Toy example: Autoregressive process}
\label{section:toy_example}

Consider the simple Markov chain on $\R^d$ defined for $\rho \in (0, 1)$ and independent $\xi_t \sim N(0, I_d)$ for $t \in \Z_+$ by
\begin{align}
X_{t}
= \rho X_{t - 1} + \sqrt{1 - \rho^2} \xi_t.
\label{eq:ar_process}
\end{align}
Denote the normal distribution by $N(\mu, \Sigma)$ with mean $\mu$ and covariance matrix $\Sigma$.
With $\Gamma \equiv N(0, I_d)$, it is readily seen that if $X_0 \sim \Gamma$, then $X_t \sim \Gamma$.
We are interested in estimating the mean $\int x \Gamma(dx) \equiv 0$ through simulation.

We will choose importance sampling proposal $\Gamma_h \equiv N(0, (1/2 + h) I_d)$ with $h > 0$ to sample independently $Y_j \sim \Gamma_h$ for $j = 1,\ldots, N $ and construct the random initial distribution $\Pi^N_Y$ for the autoregressive process $X_t$ defined by \eqref{eq:ar_process}.
Using this chosen proposal, we can directly compute
\begin{align*}
\int w d\Gamma
&= \frac{[1/2 + h]^{d/2}}{(2 \pi)^{d/2}} \int 
\exp\left[
- \frac{1}{2} \left( \frac{2h}{1/2 + h} \right) \norm{x}_2^2 
\right] dx
= \frac{[1/2 + h]^{d}}{(2h)^{d/2}} 
\\
&= \left( \frac{1}{ 2 \sqrt{2 h} } + \frac{ \sqrt{h} }{ \sqrt{2} } \right)^{d}.
\end{align*}
The optimal $h = 1/2$ results in the invariant distribution for the proposal.

Using the identity for the variance, we have the geometric drift condition \eqref{assumption:drift}. Indeed for any $x \in \R^d$,
\begin{align*}
\E\left[ 1 +  \norm{X_1}_2^2 \bigm| X_0 = x\right]
&= \rho^2 ( 1 + \norm{x}_2^2 ) + (1 - \rho^2) (1 + d).
\end{align*}
Since the level sets of the drift function are compact, it can be shown the Markov chain converges geometrically fast and the invariant measure is unique \citep{hairer:mattingly:2011}.
For any $r > 1$, define the set $C_r = \{ x \in \R^d :  \norm{x}_2^2 \le r d \}$ and let $S^m_{ \tau_{C_r} }| Y$ independent $m = 1, \ldots, N$ using the autoregressive process \eqref{eq:ar_process}.

The drift controls the return times to the set $C_r$ and combining these two results, we can apply Theorem~\ref{theorem:mse_bound} to estimate functions satisfying $|\phi(x)| \le \sqrt{1 + \norm{x}_2^2}$.
Indeed, Theorem~\ref{theorem:mse_bound} says that if
\begin{align}
&N
\propto \frac{r^2 d^2}{\delta \e^2 (1 - \rho_r)^2 }
\left( \frac{1}{ 2 \sqrt{2 h} } + \sqrt{\frac{ h }{ 2 } } \right)^{d}
&M \propto \frac{ r d }{ \delta \e^2 ( 2 - \rho_r )^2}
\label{eq:toy_lb}
\end{align}
where $\rho_r = \rho^2 + (1 - \rho^2) (d + 1) / (rd + 1)$,
then  with probability at least $1 - \delta$, 
\[
\left| \frac{1}{M} \sum_{m = 1}^M S^m_{\tau_{C_r}}(\phi)
- \int_{\R^d} \phi d\Gamma \right| 
\le \e.
\]

Figure~\ref{figure:ar_bound} plots the precise theoretical lower bounds \eqref{eq:toy_lb} on the number of samples for $N$ and $M$ to increasing dimensions $\{ 1, 5, 10, 15, 20, 25, 30 \}$ with tuning parameters $\e = \delta = .1$, $\rho = .9$, $h = .49$, and $r = 1.5$.
We can see a linear scaling with the dimension for the number of Markov chains $M$.
We also see roughly quadratic scaling with respect to the dimension for $N$ although the lower bound on the required number of samples $N$ in the initialization is much larger.
However, alternative tuning parameter choices on the proposal may lead to approximately exponential scaling with respect to the dimension.

\begin{figure}[t]
\centering
\begin{subfigure}{.4\linewidth}
\includegraphics[width=\linewidth]{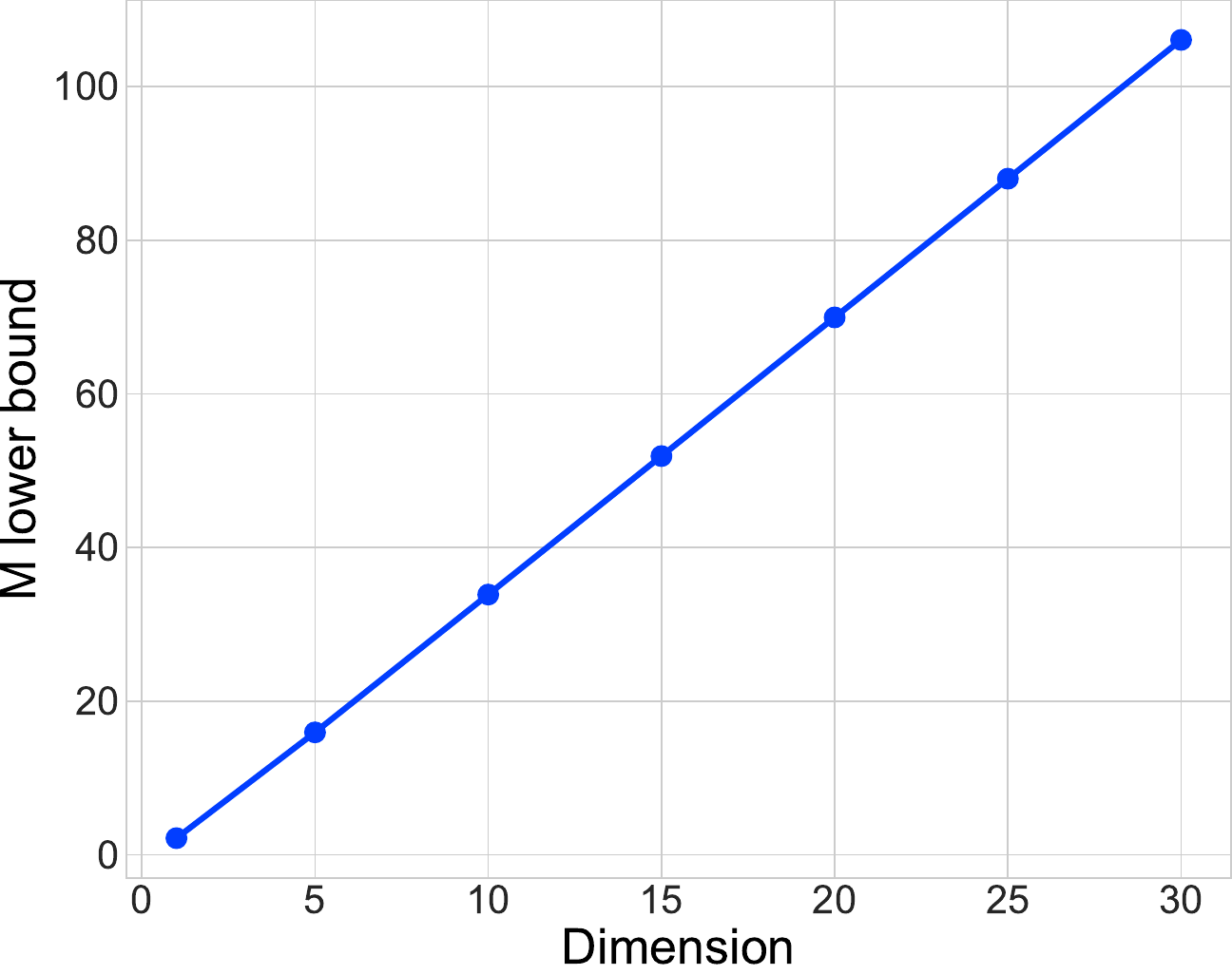}
\end{subfigure}
\hspace{.1em}
\begin{subfigure}{.44\linewidth}
\includegraphics[width=\linewidth]{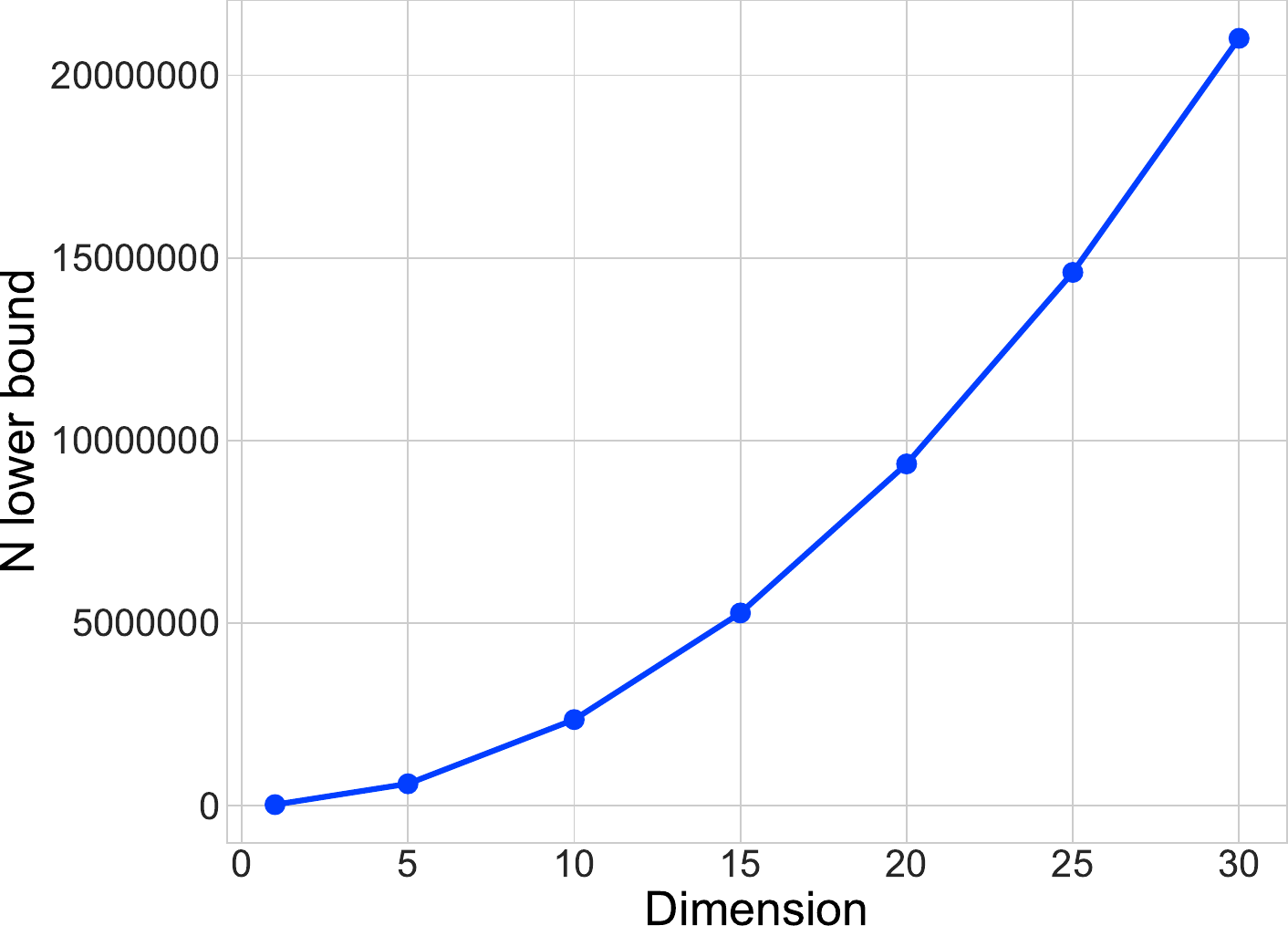}
\end{subfigure}
\caption{
Computation of the lower bound on the number of Markov chains $M$ and initialization samples $N$ for the MSC estimator using the autoregressive process with respect to increasing dimensions.
} 
\label{figure:ar_bound}
\end{figure}

We look to investigate the empirical estimation performance of the MSC estimator using this autoregressive process.
We attempt to estimate coordinates of the mean so that $\phi(x) = x_k$ for $k \le d$ via an MSC simulation.
Use the tuning parameters introduced previously, we simulate the MSC estimate with $N = 10^6$ and $M = 10^5$ independent samples in dimension $d = 2$.
In this case, we can directly sample the target distribution and compare to standard Monte Carlo estimation.

A comparison is made to the MSC simulation estimate within 2 estimated standard errors to standard Monte Carlo realizations and its estimated standard error.
Figures \ref{figure:ar_sim_a} and \ref{figure:ar_sim_b} illustrate the results of the simulation by plotting both coordinates individually.
Compared to standard Monte Carlo, the MSC estimate shows increased variability but can be utlized when standard Monte Carlo is not available.
Figure~\ref{figure:ar_sim_c} shows the run length of the independent Markov chain simulations until each Markov chain returns to the set $C_r$ defined in this section.
Here we observe an alignment with the theoretical analysis that the Markov chain has a rapid return time and remains in the many-short-chains regime.
In simulations not presented here, we observed it is possible to increase the radius of the set $C_r$ to further reduce the run length while also observing stable estimation performance. 

\begin{figure}[t]
\centering
\begin{subfigure}{.32\linewidth}
  \includegraphics[width=\linewidth]{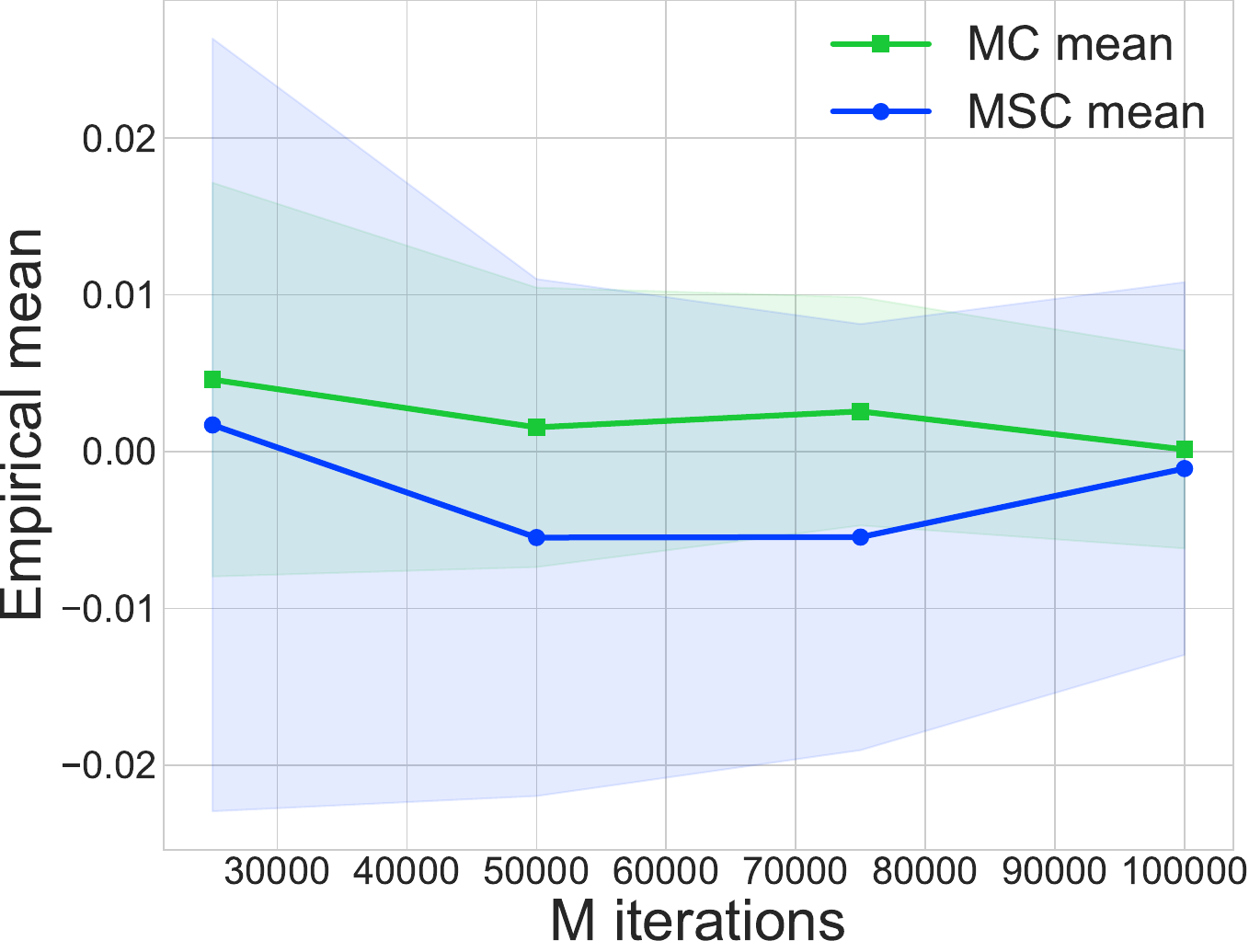}
  \caption{}
  \label{figure:ar_sim_a}
\end{subfigure}
\hspace{.2cm}
\begin{subfigure}{.32\linewidth}
  \includegraphics[width=\linewidth]{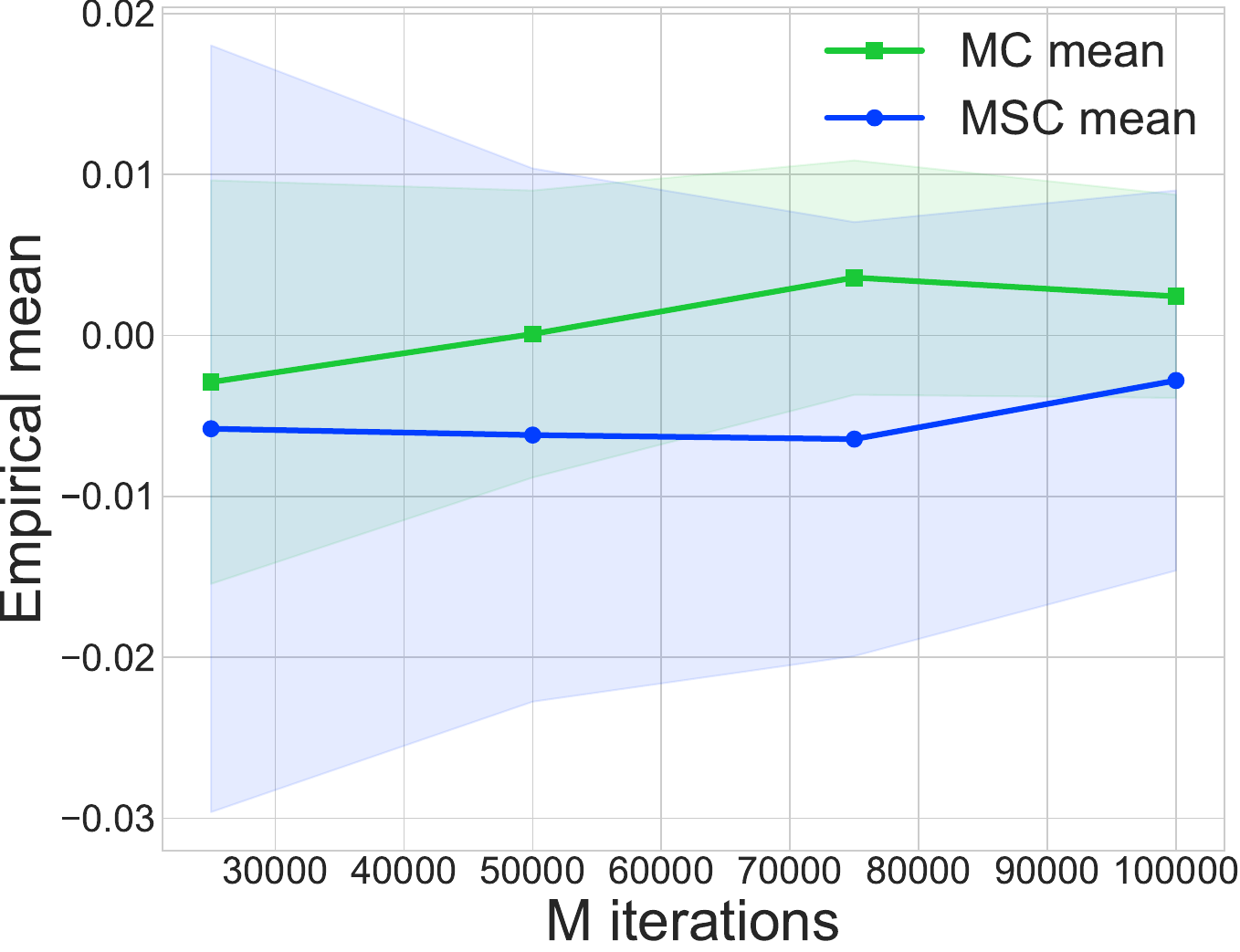}
  \caption{}
  \label{figure:ar_sim_b}
\end{subfigure}
\begin{subfigure}{.32\linewidth}
  \includegraphics[width=\linewidth]{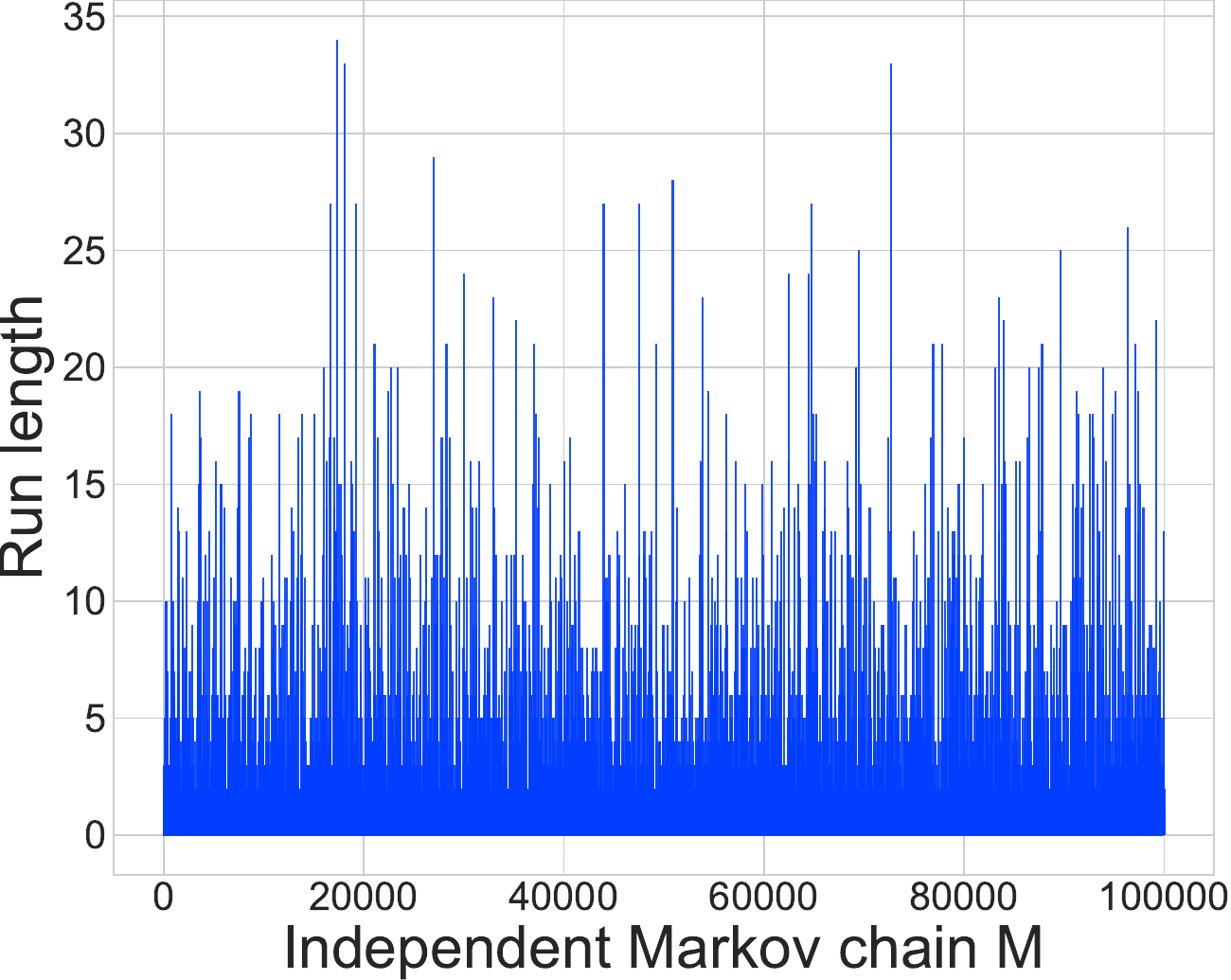}
  \caption{}
  \label{figure:ar_sim_c}
\end{subfigure}
\caption{
Figures \ref{figure:ar_sim_a} and Figure~\ref{figure:ar_sim_b} plot the coordinates of the MSC mean within 2 estimated standard errors compared to standard MC. Figure~\ref{figure:ar_sim_b} plots the length of the Markov sample paths over the independent runs of the Markov chains.
} 
\label{figure:ar_sim}
\end{figure}

\section{Cardiovascular disease prediction with the Pólya-Gamma Gibbs sampler}
\label{section:pg_application}

We look now to a non-trivial example on a real data set with Bayesian logistic regression and the Pólya-Gamma Gibbs sampler \citep{polson:2013}.
A convergence analysis in the single-chain regime has been developed \citep{choi:2013}, but the scaling of the constants in the convergence rate may lead to a long simulation length in moderate to high dimensions.
It may then be an advantage to incorporate parallel computation using many-short-chains for this Markov chain that does not require such a convergence analysis.

Here we use standard notation in Bayesian inference although it overlaps slightly with our previous notation.
Let $(Y_i, X_i)_{i = 1}^n$ with $Y_{i} \in \{0, 1\}$ and $X_i \in \R^d$ and define $Y = (Y_1, \ldots, Y_n)^T \in \{ 0, 1 \}^n$ and $X = (X_1, \ldots, X_n)^T \in \R^{n \times d}$.
For $x \in \R$, define the sigmoid function by $\sigma(x) = (1 + \exp(-x))$.
Consider the Bayesian logistic regression model with a Gaussian prior satisfying
\begin{align*}
&Y_i | X_i, \beta \sim \text{Bern}( \sigma(\beta^T X_i) )
&\beta \sim N(0, \Sigma)
\end{align*}
where $\Sigma \in \R^{d \times d}$ is a symmetric, positive-definite (SPD) covariance matrix.
Define the negative log-likelihood by
$
\l_n(\beta) = \sum_{i = 1}^n \left[ \log(1 + \exp(X_i^T \beta)) - Y_i X_i^T \beta \right].
$
The posterior for this model $\Pi_n$ has a Lebesgue density defined by
\[
\pi_n(\beta)
\propto \exp\left[
-\l_n(\beta)
- \frac{1}{2} \beta^T \Sigma^{-1} \beta
\right].
\]
We will be interested in estimating the posterior mean of the coefficients $\int \beta \Pi_n(d\beta)$ to perform Bayesian inference.

The Pólya-Gamma Gibbs sampler uses Pólya-Gamma random variables to construct an augmented two-variable deterministic scan Gibbs sampler. This results in a Markov chain $(\w_t, \beta_t)_{t = 0}^\infty$ and a marginal chain $(\beta_t)_{t = 0}^\infty$ that has the posterior as its invariant distribution.
Let $\text{PG}(1, b)$ denote the Pólya-Gamma distribution with parameter $b \ge 0$ \citep{polson:2013}.
For $\w \in \R^n$, define 
\begin{align*}
&\Omega = \text{diag}(\w)
&\Sigma(\w) = (X^T \Omega X + \Sigma^{-1} )^{-1}
&&\mu(\w) = \Sigma(\w) X^T (Y - (1/2) 1_d).
\end{align*}
For $t = 1, \ldots$, this Pólya-Gamma Markov chain alternates
\begin{align*}
&\w_t = (\w^1_t, \ldots, \w^n_{t}) | \beta_{t-1}
&\text{with independent} 
&&\w^i_t \sim \text{PG}(1,
| X_i^T \beta_{t-1} |)
\\
&\beta_t | \w_t 
\sim N\left[ \mu(\w_t), \Sigma(\w_t) \right].
\end{align*}
%In this example, we will use the marginal chain $(\beta_t)_{t = 0}^\infty$ from this data-augmented Markov chain.

The posterior density is log-concave and we prove a general result to choose the importance sampling proposal in this scenario.
For functions $g : \R^d \to \R$, denote the gradient and Hessian at $x \in \R^d$ by $\nabla g(x)$ and $\nabla^2 g(x)$ respectively.
For an arbitrary matrix $A$, let $\norm{A}_2$ denote the square root of the largest eigenvalue of $A^T A$.
The following Proposition allows choosing a proposal for many log-concave distributions and we will apply it to this example.

\begin{proposition}
\label{proposition:initial}
Let $\l : \R^d \to \R$ and suppose $\l$ is a twice continuously differentiable convex function such that for some $\lambda^* > 0$
\begin{align}
\sup_{x \in \R^d} \norm{ \nabla^2 \l(x) }_2
\le \lambda^*.
\label{eq:Hess_assumption}
\end{align}
Let $\Sigma$ be a SPD matrix and define a function $f$ by  
\begin{align*}
f(x) = \l(x) + \frac{1}{2} x^T \Sigma^{-1} x
\end{align*}
and define a probability density by $\pi(x) = \exp(-f(x)) / Z$ where the normalizing constant $Z = \int \exp(-f(x)) dx$.
Let $x^* = \text{argmin} \{ f(x) : x \in \R^d \}$ and for $h \in (0, 1/2]$, let $Q \sim N(x^*, (1/ 2 + h) \Sigma)$ with density $q$.
Then the upper bound holds
\[
\int_{\R^d} \frac{\pi(x)^2}{q(x)} dx
\le 
\det( \lambda^* \Sigma + I_d)
\left( \frac{1}{ 2 \sqrt{2 h} } + \sqrt{\frac{h}{2}} \right)^{d}.
\]
\end{proposition}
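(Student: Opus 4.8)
The plan is to evaluate $\int \pi^2/q\,dx$ in closed form up to the two unknown scalars $Z$ and $f(x^*)$, and then bound those unknowns using convexity together with the Hessian bound \eqref{eq:Hess_assumption}. First I would write $\pi(x)=\exp(-f(x))/Z$ and substitute the explicit Gaussian density of $Q\sim N(x^*,(1/2+h)\Sigma)$, so that $\pi^2/q$ becomes $\exp(-2f(x))$ times a reciprocal Gaussian kernel, giving
\[
\int \frac{\pi^2}{q}\,dx = \frac{(2\pi)^{d/2}\det((1/2+h)\Sigma)^{1/2}}{Z^2}\int \exp\!\left(-2f(x) + \tfrac{1}{2(1/2+h)}(x-x^*)^T\Sigma^{-1}(x-x^*)\right)dx.
\]

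The key analytic input is a two-sided quadratic sandwich for $f$ about its minimizer. Since $\nabla^2 f(x)=\nabla^2\ell(x)+\Sigma^{-1}$ with $0\preceq\nabla^2\ell\preceq\lambda^* I$ by convexity and \eqref{eq:Hess_assumption}, the function $f$ is strongly convex, so $x^*$ exists, is unique, and satisfies $\nabla f(x^*)=0$. Integrating the Loewner bounds $\Sigma^{-1}\preceq\nabla^2 f\preceq\lambda^* I+\Sigma^{-1}$ along the segment from $x^*$ to $x$ (the integral form of Taylor's theorem) yields
\[
\tfrac{1}{2}(x-x^*)^T\Sigma^{-1}(x-x^*) \le f(x)-f(x^*) \le \tfrac{1}{2}(x-x^*)^T(\lambda^* I+\Sigma^{-1})(x-x^*).
\]
I would use the lower bound to control the numerator integral: substituting it into the exponent leaves a quadratic form in $u=x-x^*$ with coefficient $-1+\frac{1}{1+2h}=-\frac{2h}{1+2h}$, which is strictly negative exactly because $h>0$, so the Gaussian integral converges to $e^{-2f(x^*)}(2\pi)^{d/2}\left(\frac{1+2h}{4h}\right)^{d/2}\det(\Sigma)^{1/2}$. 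Symmetrically, the upper bound gives the lower estimate $Z \ge e^{-f(x^*)}(2\pi)^{d/2}\det(\lambda^* I+\Sigma^{-1})^{-1/2}$.

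Combining these three evaluations, the factors $e^{-2f(x^*)}$ and $(2\pi)^d$ cancel between the numerator and $Z^2$. The determinant factors collapse via $\det(\Sigma)\det(\lambda^* I+\Sigma^{-1})=\det(\lambda^*\Sigma+I_d)$, and the remaining scalar in $h$ simplifies as $(1/2+h)^{d/2}(\frac{1+2h}{4h})^{d/2}=\left(\frac{1+2h}{2\sqrt{2h}}\right)^d=\left(\frac{1}{2\sqrt{2h}}+\sqrt{h/2}\right)^d$, which is the claimed bound. The main obstacle is the Hessian sandwich step: I must justify that the operator-norm bound \eqref{eq:Hess_assumption} together with convexity yields the matrix inequalities in the Loewner order and then integrates to the stated quadratic bounds. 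Everything downstream is routine Gaussian integration and determinant bookkeeping, the only delicate point being that convergence of the numerator integral forces $h>0$ (so $h>0$, rather than $h\le 1/2$, is the binding constraint for the argument).
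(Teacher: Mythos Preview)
Your proposal is correct and follows essentially the same route as the paper: both obtain the quadratic sandwich $\tfrac{1}{2}(x-x^*)^T\Sigma^{-1}(x-x^*)\le f(x)-f(x^*)\le \tfrac{1}{2}(x-x^*)^T(\lambda^* I+\Sigma^{-1})(x-x^*)$ from Taylor expansion together with the Hessian bounds, use the upper quadratic to lower-bound $Z$ and the lower quadratic to upper-bound the numerator integral, and then perform the identical Gaussian and determinant simplifications. Your presentation via the Loewner ordering $\Sigma^{-1}\preceq\nabla^2 f\preceq \lambda^* I+\Sigma^{-1}$ is slightly more compact than the paper's, but the argument is the same.
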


Let $\beta^*_n$ be the maximum of the posterior density $\pi_n$, which always exists.
With Proposition~\ref{proposition:initial} in mind, we will choose the importance sampling proposal $Q_h \equiv N(\beta_n^*, (1/2 + h) \Sigma)$ with $h \in (0, 1/2]$.
Let $\theta = (\theta_1, \ldots, \theta_N)$ where $\theta_j \sim Q_h$ are independent to construct the random initial distribution $\Pi_\theta^N$.
%To generate the MSC estimator, we use independent Markov chains from the marginal of the Pólya-Gamma sampler defined in this section initialized with $\beta_0 \sim \Pi_\theta^N$.
Define $L = \norm{ \Sigma }_2^2 \norm{ X^T (Y - 1/2 1_d) }_2^2$ and for any $r > 1$, define the set 
\begin{align}
C_r = \{ \beta \in \R^d : \norm{ \beta }_2^2 \le r L \}.
\label{eq:C_pg}
\end{align}
A drift condition is shown in Proposition~\ref{proposition:pg_sampler} to this set $C_r$ and yields the main result for the error analysis of the MSC estimator using this Pólya-Gamma sampler.

\begin{proposition}
\label{proposition:pg_sampler} 
With the set $C_r$ defined by \eqref{eq:C_pg}, let $\theta_j \sim Q_h$ be independent for $j = 1, \ldots, N$ and $S^m_{\tau_{C_r}}(\phi) | \theta$ be independent $m = 1, \ldots, M$ using marginal Pólya-Gamma Markov chains $(\beta^m_t)_{t = 1}^{\infty}$ initialized at $\beta^m_0 \sim \Pi_\theta^N$.
Then for all functions with $|\phi(\beta)| \le \sqrt{ \norm{\beta}_2^2 + 1}$,
\begin{align*}
\E\left[ 
\left| 
\frac{1}{M} \sum_{m = 1}^M S^m_{\tau_{C_r}}(\phi) - \int_\X \phi d\Pi 
\right|^2
\right]
\le \left(
\frac{\gamma_r \sqrt{(r + 1)L + 2}}{ \sqrt{M} (2 - \gamma_r) }
+ \frac{[2 L + 1] W_d }{\sqrt{ N } (1 - \gamma_r )}
\right)^2.
\end{align*}
where $\gamma_r = (1 + L)/(1 + rL)$ and
\[
W_d = \det( \norm{X}_2^2 \Sigma/4 + I_d) \left( \frac{1}{ 2 \sqrt{2 h} } + \sqrt{\frac{h}{2}} \right)^{d}.
\]
\end{proposition}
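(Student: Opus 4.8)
The plan is to instantiate Theorem~\ref{theorem:mse_bound} for the marginal Pólya-Gamma chain $(\beta_t)$ by (i) verifying the geometric drift condition of Assumption~\ref{assumption:drift} with $f = V$, (ii) recognizing $C_r$ as a sublevel set of $V$ so that the constants $\gamma_r$, $K$, $R$ and $A_R$ may be read off, and (iii) bounding the importance-weight quantity $\int w \, d\Pi_n$ through Proposition~\ref{proposition:initial}. Uniqueness of $\Pi_n$ as the invariant measure, required by \eqref{eq:unique_invariant}, follows from the geometric ergodicity established for this sampler in \citep{choi:2013}.

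First I would establish the drift with $V(\beta) = \norm{\beta}_2^2 + 1$, chosen so that $\sqrt{V(\beta)} = \sqrt{\norm{\beta}_2^2 + 1}$ coincides with the admissible class of functions. Conditioning on $\w_1$ and using $\beta_1 \mid \w_1 \sim N(\mu(\w_1), \Sigma(\w_1))$ gives
\[
\E\!\left[ V(\beta_1) \mid \w_1 \right] = 1 + \norm{\mu(\w_1)}_2^2 + \tr \Sigma(\w_1).
\]
The key structural fact is that Pólya-Gamma variates are nonnegative, so $\Omega = \mathrm{diag}(\w_1) \succeq 0$ and hence $\Sigma(\w_1) = (X^T \Omega X + \Sigma^{-1})^{-1} \preceq \Sigma$. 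This yields $\tr \Sigma(\w_1) \le \tr \Sigma$ together with
\[
\norm{\mu(\w_1)}_2 \le \norm{\Sigma(\w_1)}_2 \, \norm{X^T(Y - (1/2) 1_d)}_2 \le \norm{\Sigma}_2 \, \norm{X^T(Y - (1/2) 1_d)}_2,
\]
so $\norm{\mu(\w_1)}_2^2 \le L$ uniformly in $\w_1$. Averaging over $\w_1$ then bounds $(\P V)(\beta)$ by a constant independent of $\beta$, which I would rearrange into the form $(\P V)(\beta) - V(\beta) \le -(1 - \gamma) V(\beta) + K$ with $f = V$. Since the resulting bound is uniform in the state, the inequality holds for a range of $\gamma$, and choosing $R = rL + 1$ makes $\{V \le R\}$ equal to $C_r$ from \eqref{eq:C_pg}; substituting $\sup_{\beta \in C_r} V(\beta) = R$ into the definitions of Theorem~\ref{theorem:mse_bound} produces $\gamma_r = \gamma + K/R$ and $A_R$ in terms of $L$ and $r$.

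For the importance weights I would apply Proposition~\ref{proposition:initial} with $\l = \l_n$ and prior covariance $\Sigma$. The logistic negative log-likelihood is convex with
\[
\nabla^2 \l_n(\beta) = \sum_{i=1}^n \sigma(X_i^T \beta)\bigl(1 - \sigma(X_i^T \beta)\bigr) X_i X_i^T \preceq \tfrac{1}{4} X^T X,
\]
so $\sup_\beta \norm{\nabla^2 \l_n(\beta)}_2 \le \norm{X}_2^2/4 =: \lambda^*$, verifying \eqref{eq:Hess_assumption}. Because the proposal $Q_h = N(\beta_n^*, (1/2 + h)\Sigma)$ is exactly the law appearing in Proposition~\ref{proposition:initial} centered at the posterior mode $\beta_n^*$, and because $\int w \, d\Pi_n = \int \pi_n^2 / q \, d\beta$, the proposition bounds this integral by $\det(\lambda^* \Sigma + I_d)(\tfrac{1}{2\sqrt{2h}} + \sqrt{h/2})^d = W_d$. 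Assembling the drift constants and this weight bound inside the right-hand side of Theorem~\ref{theorem:mse_bound} then yields the stated inequality.

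The main obstacle is the drift step. Everything downstream reduces to substitution, but the short-return-time behaviour rests entirely on the uniform one-step control $\Sigma(\w) \preceq \Sigma$ and $\norm{\mu(\w)}_2^2 \le L$; it is the nonnegativity of the Pólya-Gamma latent variables that collapses the conditional second moment of $\beta_1$ to a bound independent of the current state, and care is needed in tracking the trace term $\tr \Sigma$ and the minimal admissible $\gamma$ so that the reported $\gamma_r = (1 + L)/(1 + rL)$ and the factor $2L + 1$ emerge cleanly from $\gamma_r = \gamma + K/R$ and $A_R$.
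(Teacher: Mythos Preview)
Your proposal is correct and follows essentially the same approach as the paper: verify a geometric drift with $V(\beta)=1+\norm{\beta}_2^2$, bound the Hessian of $\l_n$ to feed Proposition~\ref{proposition:initial}, and then substitute into Theorem~\ref{theorem:mse_bound}. The only notable difference is that you obtain $\Sigma(\w)\preceq\Sigma$ (and hence the trace and operator-norm bounds) directly from the Loewner ordering via $X^T\Omega X\succeq 0$, whereas the paper reaches the same inequalities through an explicit singular value decomposition of $\Omega^{1/2}X\Sigma^{1/2}$; your route is shorter but yields exactly the same uniform-in-$\w$ control, so nothing is lost or gained beyond brevity.
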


We now investigate the empirical performance on a real data set of the MSC estimator from Proposition~\ref{proposition:pg_sampler}.
The aim is to predict cardiovascular disease from a well-known data set provided by the Cleveland Clinic \citep{detrano:etal:1989}.
The data consists of $303$ patients with binary responses determining if cardiovascular disease is present and $14$ predictor variables based on patient characteristics.
After converting categorical predictors, a total of $21$ covariates are used.
We will investigate 
the coefficients $\beta_1$ indicating male versus female patients, $\beta_2$ for the number of major blood vessels colored by flourosopy, and $\beta_3$ for resting blood pressure in mm/Hg on admission to the hospital.
For the prior, a covariance $\Sigma = 10 I$ is chosen.

We compare the MSC estimation performance to a single-chain Pólya-Gamma Gibbs simulation to estimate the posterior mean of the covariates of interest.
For an additional comparison, a tuned random-walk Metropolis-Hastings (RWM) is also simulated using scaling parameter according to optimal scaling \citep{robe:weak:1997}.
We generate the Pólya-Gamma Gibbs sampler and RWM for $10^6$ iterations starting from the random initial distribution used the MSC estimation.
For the MSC estimate, $N = 10^7$ independent samples are used for the initial distribution and $M = 10^6$ independent Markov chains are used with tuning parameters $h = .49$ and $r = 1.001$.

Figure~\ref{figure:pg_sim} shows the means of the MSC estimates within 2 estimated standard errors of the simulations compared to the means for the single-chain simulations.
Similar performance is observed of all algorithms up to a small Monte Carlo error.
The drift condition here results in a large radius for the set $C_r$ and the simulation results in Markov chain paths of essentially length $1$.
In particular, the estimate in the simulation is an empirical mean of a combination of only $1$ step from the Gibbs sampler. 
However, the initial distribution must be carefully constructed for the MSC estimate when compared to the single-chain MCMC algorithms.

\begin{figure}[t]
\centering
\begin{subfigure}{.32\linewidth}
  \includegraphics[width=\linewidth]{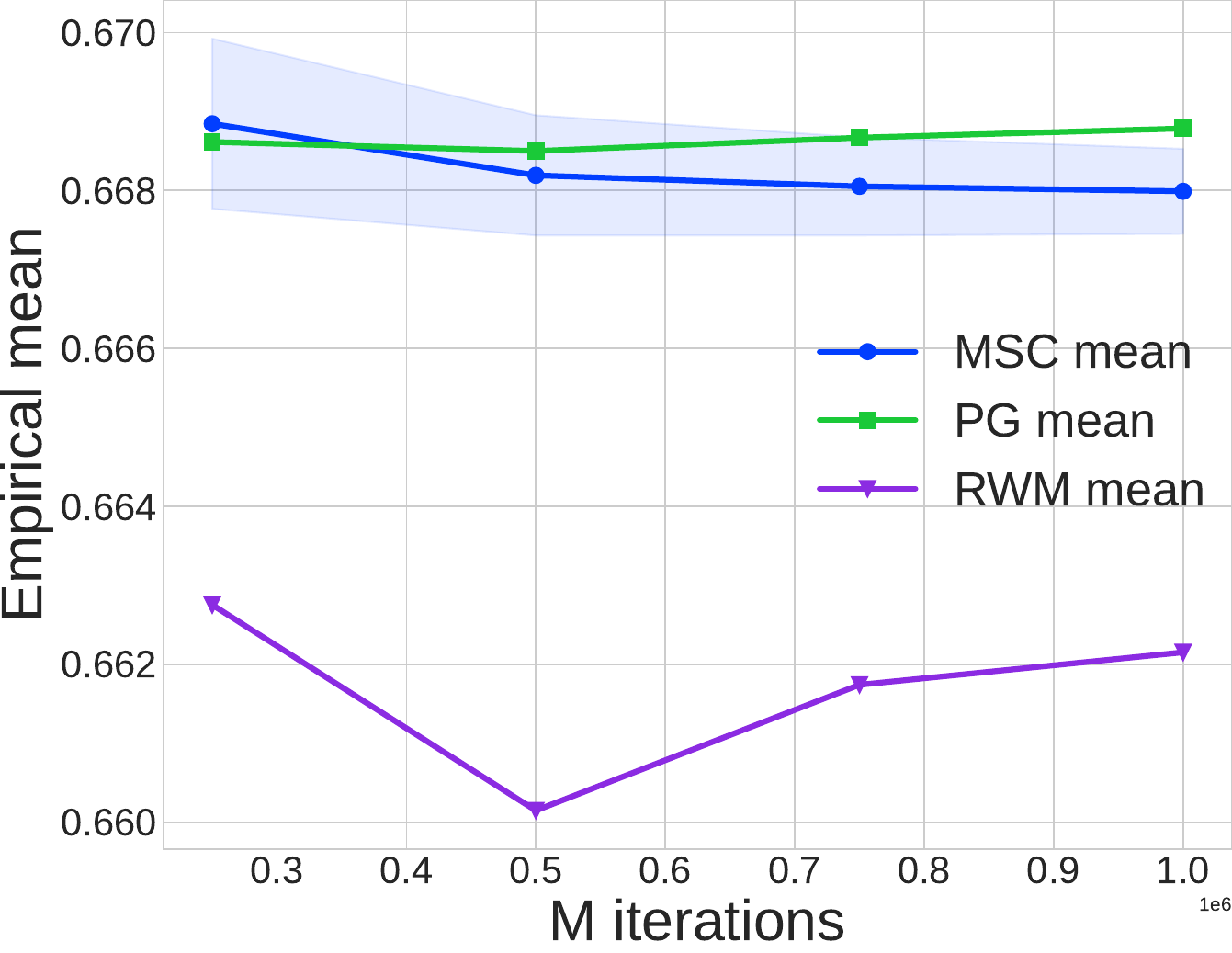}
  \caption{}
  \label{figure:pg_sim_a}
\end{subfigure}
\hspace{.2cm}
\begin{subfigure}{.32\linewidth}
  \includegraphics[width=\linewidth]{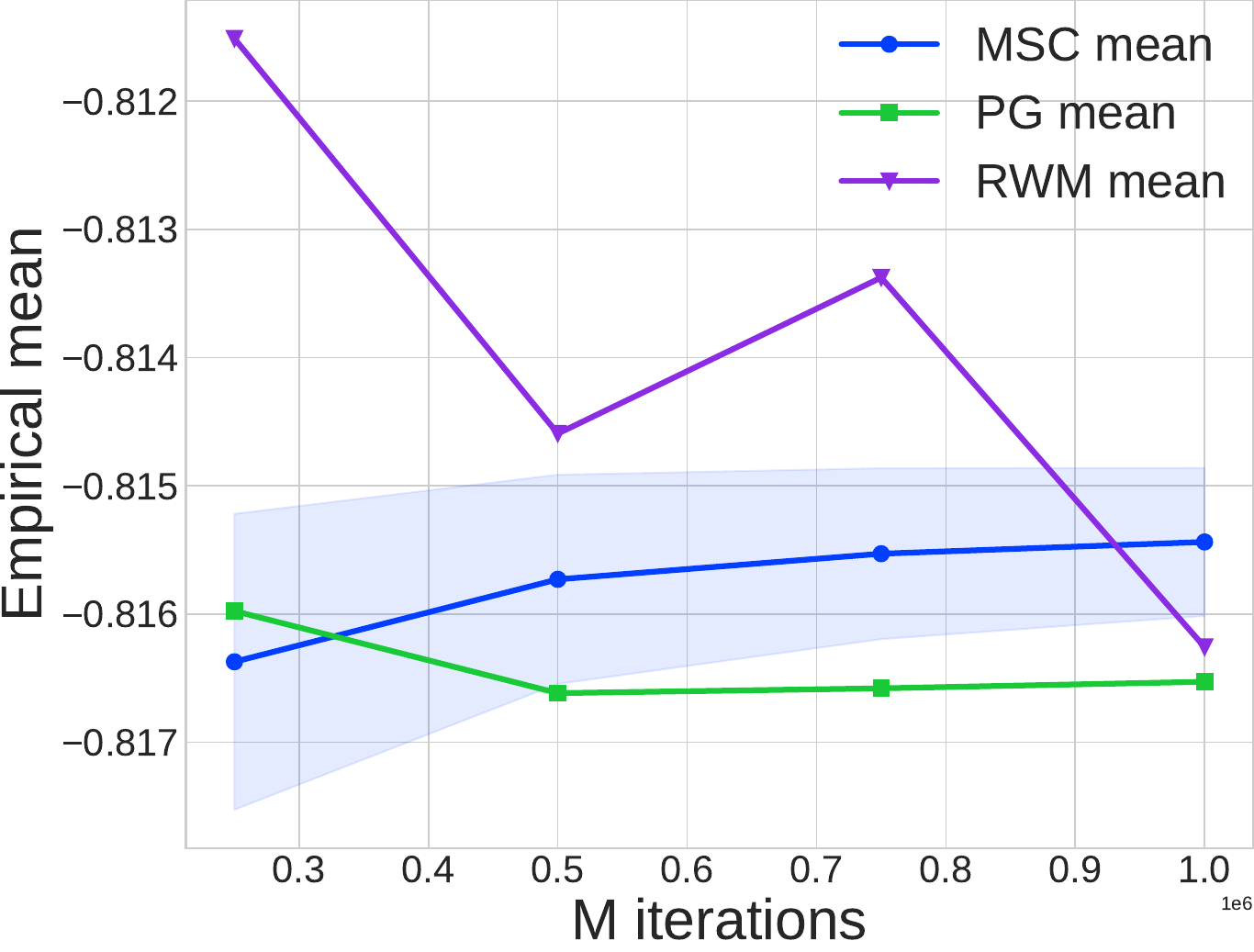}
  \caption{}
  \label{figure:pg_sim_b}
\end{subfigure}
\begin{subfigure}{.32\linewidth}
  \includegraphics[width=\linewidth]{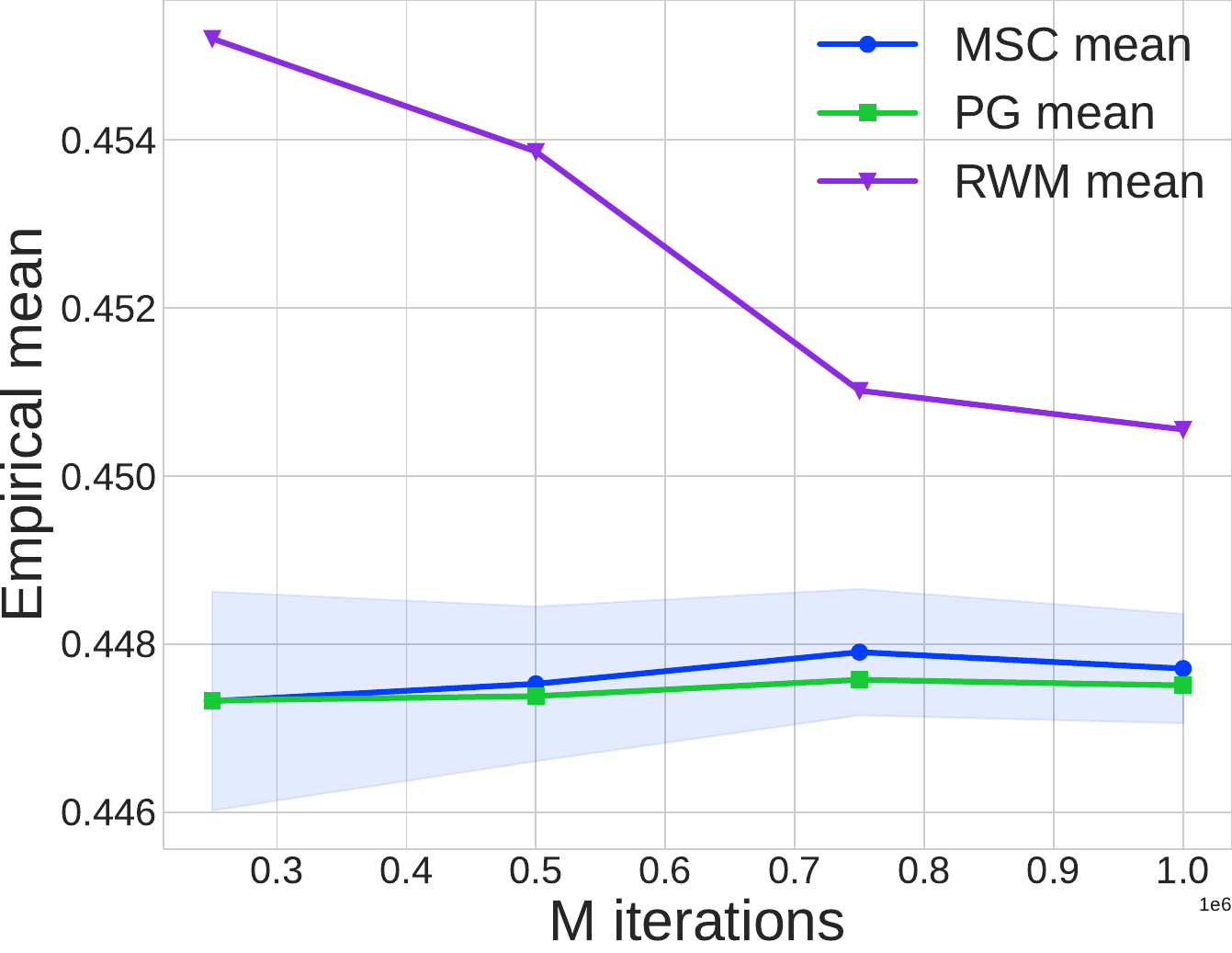}
  \caption{}
  \label{figure:pg_sim_c}
\end{subfigure}
\caption{
The mean within 2 estimated standard deviations of MSC the Pólya-Gamma sampler using compared to the single-chain Pólya-Gamma sampler and RWM algorithm.
} 
\label{figure:pg_sim}
\end{figure}

\section{Discussion and conclusions}
\label{section:conclusion}

The MSC estimator based on the representation \citep[Theorem 10.4.9]{meyn:tweedie:2009} provides interesting insight into the many-short-chains regime.
It is remarkable that the MSC estimator relies only on the drift condition for the Markov chain and not its convergence rate where the latter often scales poorly with the dimension.
Another advantage of the MSC estimator is a simple uncertainty quantification by estimating the standard error since it is based on independent Markov chains.
In comparison, determining the termination time of an MCMC simulation in the single-chain regime is a challenging problem and actively researched.

The parallel MSC estimator developed here may be practically useful when a reasonable importance sampling proposal is available along with an explicit geometric or multiplicative drift condition.
However, this may not always be the case and other parallel MCMC estimation techniques may be suitable instead \citep{pierre:2020}.
For moderately sized problems, the ability to utilize parallel computation is an important property.
However, for some high dimensional problems, the non-asymptotic bounds may not be reasonable unless strong conditions are satisfied so that the non-asymptotic bounds scale appropriately.
There are also some criticisms and open questions on the benefit of using the MSC estimator over self-normalized importance sampling.
The current importance sampling bounds \citep[Theorem 2.1]{agapiou:2017} hold for bounded functions and require stronger conditions for unbounded functions \citep[Theorem 2.3]{agapiou:2017}.
An alternative MSC estimator can also be defined based on a second representation of the invariant measure and this appears to bear some further relations to importance sampling \citep[Theorem 10.4.9]{meyn:tweedie:2009}.

There appear to be many future research directions.
For example, the optimal set $C$ and drift condition is not understood, but this is also the case in convergence analysis using drift and minorization conditions for Markov chains.
Empirically, the MSC estimator has different run lengths based on the size of the set $C$ and benefits and drawbacks of this still still remain somewhat elusive. 
Another possible research direction is to develop error bounds for not only estimation but not for stronger discrepancies such as Wasserstein distances and other probability discrepancies.
However, the lower bounds for such distances are poor in high dimensions \citep{dudley:1969}.

\section*{Acknowledgements}
Thanks to Radu Craiu, Jeffrey Rosenthal, Qiang Sun, Galin Jones, and many others at the University of Toronto for insightful conversations while writing this manuscript.

\section*{Supplementary material}

The Pypolyagamma library \url{https://github.com/slinderman/pypolyagamma} is used by Scott Linderman to generate Pólya random variables.
Simulation and all code is available
\url{https://github.com/austindavidbrown/msc-estimator}.
The cardiovascular data is available at the UCI database \url{https://archive.ics.uci.edu/dataset/45/heart+disease}.

\appendix

\section{Supporting technical results for Section~\ref{section:mse_error_bound}}

The following bound is well-known.
\begin{lemma}
\label{lemma:f_integrable}
Assume the drift condition \eqref{assumption:drift} holds.
Then
\[
\int_{\X} f d\Pi
\le \frac{
K
}{
1 - \gamma
}.
\]
\end{lemma}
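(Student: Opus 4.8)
The plan is to integrate the drift inequality in Assumption~\ref{assumption:drift} against the invariant measure $\Pi$ and to exploit the invariance relation $\Pi \P = \Pi$ so that the $V$-terms cancel. Rewriting the drift pointwise as
\[
(1 - \gamma) f(x) \le V(x) - (\P V)(x) + K,
\]
which is legitimate because $V$ is finite-valued and hence $(\P V)(x) \le V(x) + K < \infty$ at every $x$, I would integrate both sides with respect to $\Pi$. Using $\int_\X \P V \, d\Pi = \int_\X V \, d(\Pi\P) = \int_\X V \, d\Pi$ (Tonelli is available since $V \ge 0$), the two $V$-integrals should cancel and leave $(1 - \gamma)\int_\X f \, d\Pi \le K$, which is the claim after dividing by $1 - \gamma$.

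The one subtle point, and the main obstacle, is that the cancellation $\int_\X \P V \, d\Pi - \int_\X V \, d\Pi = 0$ is only valid when $\int_\X V \, d\Pi < \infty$; otherwise it is an indeterminate $\infty - \infty$, and finiteness of $\int_\X V \, d\Pi$ is \emph{not} assumed. I would resolve this by truncation. Fix $m \ge 1$ and set $W_m = V \wedge m$, which is bounded and therefore $\Pi$-integrable. Since $v \mapsto v \wedge m$ is concave, Jensen's inequality gives $\P W_m \le (\P V) \wedge m \le \P V$. Writing $f_m = f \, I_{\{V \le m\}}$ and splitting on the sublevel set $\{V \le m\}$ and its complement, one checks that the truncated drift $\P W_m - W_m \le -(1 - \gamma) f_m + K$ holds everywhere: on $\{V \le m\}$ it reduces to the original drift, while on $\{V > m\}$ the left side is $\le 0$ and the right side equals $K \ge 0$.

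With $W_m$ bounded, the cancellation is now fully justified, giving $0 \le -(1 - \gamma)\int_\X f_m \, d\Pi + K$, that is, $\int_\X f \, I_{\{V \le m\}} \, d\Pi \le K/(1 - \gamma)$ for every $m$. Letting $m \to \infty$, the sets $\{V \le m\}$ increase to $\X$ because $V$ is everywhere finite, so monotone convergence yields $\int_\X f \, d\Pi \le K/(1 - \gamma)$. The truncation-and-limit step is the only place that requires care; everything else is the single-line integration of the drift against the invariant measure.
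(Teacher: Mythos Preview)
Your proof is correct and the truncation step handles the $\infty-\infty$ issue cleanly. The paper, however, takes a different route: it iterates the drift to obtain
\[
\frac{1}{n}\sum_{k=0}^{n-1} (\P^k f)(x) \le \frac{V(x)}{n(1-\gamma)} + \frac{K}{1-\gamma},
\]
uses invariance so that $\int_\X \frac{1}{n}\sum_{k=0}^{n-1}\P^k f\,d\Pi = \int_\X f\,d\Pi$ for every $n$, and then passes to the limit in $n$ via a Fatou-type argument. Your approach keeps $n=1$ but truncates $V$ to make the cancellation $\int \P W_m\,d\Pi = \int W_m\,d\Pi$ legitimate, then lets the truncation level tend to infinity by monotone convergence. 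The Ces\`aro-averaging argument is the classical Meyn--Tweedie device; your truncation argument is more elementary, isolates exactly where finiteness of $\int V\,d\Pi$ would otherwise be needed, and sidesteps any delicate use of reverse Fatou. Either way, the substance is the single-line integration of the drift against $\Pi$; the two proofs differ only in how they justify that the $V$-terms really drop out.
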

\begin{proof}
The drift condition implies
\begin{align*}
P^n V(x) - V(x)
&= \sum_{k = 1}^{n - 1} \left[ 
P^{k + 1}V(x) - P^{k}V(x)
\right]
\le -(1 - \gamma) \sum_{k = 0}^{n - 1} P^k f(x) + n K.
\end{align*}
By Fatou's lemma \citep[Theorem 2.8.3]{bogachev:2007}
\begin{align*}
\int_{\X} f d\Pi
= \limsup_n \int_{\X} \frac{1}{n} \sum_{k = 0}^{n - 1} P^k f(x) d\Pi
\le \int_{\X} \limsup_n \frac{1}{n} \sum_{k = 0}^{n - 1} P^k f(x) d\Pi
\le \frac{
K
}{
1 - \gamma
}.
\end{align*}
\end{proof}

The following bound in Lemma~\ref{lemma:drift_martingale_ub} is implied by the drift condition.
The result is similar to \citep[Proposition 11.3.2]{meyn:tweedie:2009}.
Denote $a \wedge b, a \vee b$ for the minimum and maximum of two numbers $a, b$.

\begin{lemma}
\label{lemma:drift_martingale_ub}
If the drift condition \eqref{assumption:drift} holds, then for every $x \in C$,
\[
\sup_{|\phi| \le f} (\K_C |\phi|)(x)
\le \frac{
V(x) - 1 - ( 1 - \gamma ) f(x) + 2 K
}{1 - \gamma_R}
\]
where $\gamma_R = \gamma + K/R$.
\end{lemma}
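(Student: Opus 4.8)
The plan is to first collapse the supremum and then control a single expected occupation sum by a drift inequality that is sharpened on the complement of $C$. Because $\K_C$ is a positive kernel, every $\phi$ with $|\phi| \le f$ satisfies $(\K_C|\phi|)(x) \le (\K_C f)(x)$, with equality when $|\phi| = f$, so the supremum equals $(\K_C f)(x)$. For $x \in C$ we have $I_C(x) = 1$, and the definition of $S_{\tau_C}$ gives $(\K_C f)(x) = \E[\sum_{k=1}^{\tau_C} f(X_k) \mid X_0 = x]$. Hence it suffices to bound this expected sum by the stated quantity.

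The crucial observation is that the additive constant $K$ can be absorbed into the $f$-term whenever the chain sits outside $C$. Indeed, if $x \notin C$ then $f(x) > R$, so $K \le (K/R) f(x)$ and the drift condition \eqref{assumption:drift} sharpens to $(\P V)(x) - V(x) \le -(1 - \gamma_R) f(x)$ with $\gamma_R = \gamma + K/R$; note that $1 - \gamma_R > 0$ since $R > K/(1-\gamma)$. This clean inequality is available at every interior time $1 \le n \le \tau_C - 1$, because each such $X_n$ lies outside $C$ by the definition of the first return time.

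Next I would run a telescoping (optional-stopping) argument over the indices $n = 1, \ldots, \tau_C$. Working first with the truncation $\tau_C \wedge N$ to keep every expectation finite, the tower property together with the telescoping identity yields
\[
\E\left[ V(X_{(\tau_C \wedge N)+1}) - V(X_1) \mid X_0 = x \right]
= \E\left[ \sum_{n=1}^{\tau_C \wedge N} \left( (\P V)(X_n) - V(X_n) \right) \mid X_0 = x \right].
\]
Inserting the clean drift at the interior times $1 \le n \le \tau_C - 1$ and the full drift at the single return time $n = \tau_C$ bounds the right-hand side by $-(1-\gamma_R)\E[\sum_{n=1}^{\tau_C \wedge N} f(X_n) \mid X_0 = x] + K$. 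After discarding $\E[V(X_{(\tau_C \wedge N)+1})] \ge 1$ and using $\E[V(X_1) \mid X_0 = x] = (\P V)(x) \le V(x) - (1-\gamma)f(x) + K$ from the full drift at the initial step, the two separate constants $K$ combine; letting $N \to \infty$ by monotone convergence (valid since $f \ge 1 > 0$) and dividing by $1 - \gamma_R$ gives precisely $\frac{V(x) - 1 - (1-\gamma)f(x) + 2K}{1-\gamma_R}$.

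The main obstacle is the rigorous handling of the optional-stopping step and the limit interchange: one must truncate at $\tau_C \wedge N$ to avoid any $\infty - \infty$ ambiguity, confirm finiteness of the finite-horizon expectations by induction through the drift, and only then pass to the limit via monotone convergence. The two appearances of $K$—one from the full drift applied at the return time $n = \tau_C$ and one from the full drift at the very first step $X_0 \to X_1$—are exactly what produce the factor $2K$, so the delicate part is tracking these endpoint contributions correctly through the telescoping while using the sharpened $\gamma_R$-drift at all interior times.
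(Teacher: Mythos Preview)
Your proposal is correct and follows essentially the same route as the paper. The paper also rewrites the drift as $\P V(x) - V(x) \le -(1-\gamma_R) f(x) + K I_C(x)$, packages the telescoping into a supermartingale $M_{n+1} = V(X_{n+1}) + (1-\gamma_R)\sum_{k=1}^{n} f(X_k) - K\sum_{k=1}^{n} I_C(X_k)$, applies the optional sampling theorem with the truncated stopping time, drops $V \ge 1$, bounds $\E[V(X_1)\mid X_0 = x]$ via the full drift, and passes to the limit by monotone convergence---exactly the ingredients you outline, with the same two endpoint contributions producing the factor $2K$.
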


\begin{proof}[Proof of Lemma~\ref{lemma:drift_martingale_ub}]
The drift condition \eqref{assumption:drift} implies
\[
\P V(x) - V(x) 
\le -( 1 - \gamma_R ) f(x) + K (1 - \frac{f(x)}{R})
\le -( 1 - \gamma_R ) f(x) + K I_{C}(x).
\]
For $n \in \Z_+$, define
\begin{align}
M_{n + 1}
= V(X_{n + 1})
+ ( 1 - \gamma_R ) \sum_{k = 1}^{n} f(X_k) - K \sum_{k = 1}^{n} I_C(X_k).
\label{eq:martingale}
\end{align}
Using the drift condition, $M_{n + 1}$ is a supermartingale with respect to the standard filtration $\F_{n + 1}$ for $X_0, X_1, \ldots, X_{n + 1}$.
By the optional sampling theorem \citep[Theorem 9.12]{Kallenberg2021}, $M_{n \wedge \tau_C + 1}$ is also a supermartingale with its respective filtration and
\begin{align*}
&( 1 - \gamma_R ) \E\left[ \sum_{k = 1}^{\tau_C \wedge n} f(X_n) 
\bigm| X_0 = x \right]
+ 1 - K
\le \E[M_{\tau_c \wedge n + 1} \bigm| X_0 = x]
\\
&\le \E[M_2 \bigm| X_0 = x]
\le \E[V(X_1) \bigm| X_0 = x].
\end{align*}
Since $f \ge 1$, using monotone convergence \citep[Theorem 2.8.2]{bogachev:2007},
\begin{align*}
\E\left[ \sum_{k = 1}^{\tau_C} f(X_n) 
\bigm| X_0 = x \right]
\le \frac{
\E[V(X_1) \bigm| X_0 = x] - 1 + K
}{1 - \gamma_R}
\\
\le \frac{
V(x) - 1 - (1 - \gamma) f(x) + 2 K
}{1 - \gamma_R}.
\end{align*}
\end{proof}

The following result shows that $\Pi$ is invariant for $\K_C$ and we have an identity for many integrals with respect to the invariant measure under the drift condition \eqref{assumption:drift}.

\begin{theorem}
\label{theorem:representation}
Assume the Markov kernel $\P$ has unique invariant measure $\Pi$ and the drift condition \eqref{assumption:drift} holds. 
If $\int_C V d\Pi < \infty$, then for every $\phi : \X \to \R$ such that $|\phi| \le f$, we have $\int_\X \phi d\Pi < \infty$ and
\begin{align*}
\int_\X \K_C \phi d\Pi
= \int_\X \phi d\Pi.
\end{align*}
\end{theorem}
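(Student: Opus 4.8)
The plan is to first reduce the claimed functional identity to a statement about measures, namely that the set function $\nu(B) = \int_\X \K_C(\cdot, B)\,d\Pi$ coincides with $\Pi$, and then to upgrade from indicators to functions dominated by $f$. Before anything else I would settle integrability. Since $|\phi| \le f$ and Lemma~\ref{lemma:f_integrable} gives $\int_\X f\,d\Pi \le K/(1-\gamma) < \infty$, the right-hand side $\int_\X \phi\,d\Pi$ is finite. For the left-hand side, note that $\K_C(y,\cdot)$ is supported on $y \in C$ and $|\K_C \phi| \le \K_C f$; Lemma~\ref{lemma:drift_martingale_ub} bounds $\K_C f(y) \le [V(y) - 1 - (1-\gamma)f(y) + 2K]/(1-\gamma_R)$ on $C$, so integrating against $\Pi$ and using the hypothesis $\int_C V\,d\Pi < \infty$ shows $\int_\X |\K_C \phi|\,d\Pi < \infty$ and that $\nu$ is a finite measure. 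This is precisely where the assumption $\int_C V\,d\Pi < \infty$ (as opposed to integrability on all of $\X$) enters, and it is what forces the excursion-based argument below rather than a naive first-step telescoping over the whole space.

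Taking $\phi = I_B$, I would prove $\nu = \Pi$ by showing $\nu$ is invariant for $\P$ and invoking uniqueness. Writing $\nu(B) = \int_C \E_y[\sum_{k=1}^{\tau_C} I_B(X_k)]\,\Pi(dy)$, the Markov property shifts the occupation sum by one step, giving $\nu\P(B) = \int_C \E_y[\sum_{k=2}^{\tau_C + 1} I_B(X_k)]\,\Pi(dy)$, so the finite difference telescopes to $\nu(B) - \nu\P(B) = \int_C \E_y[I_B(X_1) - I_B(X_{\tau_C+1})]\,\Pi(dy)$. The first term is $\int_C \P(y,B)\,\Pi(dy)$, and by the strong Markov property at $\tau_C$ the second is $\int_C \int_C \Prob_y(X_{\tau_C} \in dz)\,\P(z,B)\,\Pi(dy)$. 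These cancel precisely when the return-point (trace) kernel $y \mapsto \Prob_y(X_{\tau_C} \in \cdot)$ leaves $\Pi$ restricted to $C$ invariant, i.e. $\int_C \Prob_y(X_{\tau_C}\in dz)\,\Pi(dy) = \Pi(dz)$ on $C$. Granting this, $\nu\P = \nu$, so uniqueness of the invariant probability measure yields $\nu = \nu(\X)\,\Pi$, and a Kac-type normalization identifies $\nu(\X) = \int_C \E_y[\tau_C]\,\Pi(dy) = 1$, giving $\nu(B) = \Pi(B)$.

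The main obstacle is the invariance of $\Pi$ on $C$ under the return-point kernel together with the normalization $\nu(\X) = 1$; these two facts are the substance of the classical invariant-measure representation, and under the recurrence guaranteed by the drift condition they are exactly what \citep[Lemma 2]{nummelin:1976} and \citep[Theorem 10.4.9]{meyn:tweedie:2009} supply, so I would either cite them directly for indicators or reprove the return-point invariance through a first-entrance decomposition of the stationary chain. The remaining steps are routine: the strong Markov and Fubini manipulations above are legitimate because finiteness of $\nu$ rules out an $\infty - \infty$ indeterminacy, and the extension from indicators to any $\phi$ with $|\phi| \le f$ follows by linearity over simple functions and monotone convergence applied to $\phi^+$ and $\phi^-$ separately, the limits being finite by the integrability bounds already established.
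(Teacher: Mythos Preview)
Your approach is essentially the same as the paper's: both show that the occupation measure $\nu(\cdot)=\int_C \K_C(\cdot,\cdot)\,d\Pi$ is $\P$-invariant by a one-step shift that reduces to the return-point identity $\int_C \Prob_y(X_{\tau_C}\in dz)\,\Pi(dy)=\Pi(dz)$ on $C$, then invoke uniqueness of the invariant measure and pass from indicators to $|\phi|\le f$ via simple-function approximation and the bound of Lemma~\ref{lemma:drift_martingale_ub}. The only cosmetic difference is the normalization step: the paper first proves the pointwise inequality $\nu\le\Pi$ on indicators (so that $\nu=c\Pi$ with $c\le 1$ forces $c=1$ once equality is shown on subsets of $C$), whereas you appeal to Kac's formula $\int_C \E_y[\tau_C]\,\Pi(dy)=1$; both routes are valid and rest on the same recurrence guaranteed by the drift.
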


\begin{proof}[Proof for Theorem~\ref{theorem:representation}] 
Under the drift condition, then irreducibile Markov chains with unique invariant distribution have a representation of the invariant measure $\Pi$ for non-negative simple functions \citep[Theorem 10.4.9]{meyn:tweedie:2009}. 
The assumption of an irreducible Markov chain is not required as seen in the proof \citep[Theorem 10.4.9]{meyn:tweedie:2009} and we include the proof only for completeness. 
Let $\phi : \X \to [0, \infty]$ be simple so that $\phi = \sum_{k = 1}^J a_k I_{B_k}$ for some constants $a_k \ge 0$ and sets $B_k$.
By invariance and induction it is readily shown that 
\[
\int \phi d\Pi
\ge \sum_{k = 1}^{n} \int_C \E\left[ \phi(X_k) I_{\tau_C \ge k} \bigm| X_0 = x \right] \Pi(dx).
\]
For sets $B \subseteq \X$, this implies
$
\Pi(B \cap C)
\ge \int_C \Prob\left[ X_{\tau_C} \in B, \tau_C < \infty \bigm| X_0 = x \right] \Pi(dx).
$
Suppose by contradiction that there is some set $B_0 \subseteq C$ with 
\[
\Pi(B_0) > \int_C \Prob\left[ X_{\tau_C} \in B_0, \tau_C < \infty \bigm| X_0 = x \right] \Pi(dx).
\]
We must have
\begin{align*}
\Pi(C)
= \Pi(B_0) + \Pi(C \cap B_0^c)
> \int_C \Prob\left[ \tau_C < \infty \bigm| X_0 = x \right]
\Pi(dx).
\end{align*}
Since $f \ge 1$, the drift condition implies $\Prob\left[ \tau_C < \infty \bigm| X_0 = x \right] = 1$ for all $x \in C$ and so this is a contradiction and for every set $B \subseteq \X$
\[
\Pi(B \cap C)
\le \int_C \Prob\left[ X_{\tau_C} \in B, \tau_C < \infty \bigm| X_0 = x \right] \Pi(dx)
\le \Pi(B \cap C).
\]
Combining these inequalities implies
\begin{align}
\int_C \E\left[ \phi(X_{\tau_C})I_{\tau_C < \infty} \bigm| X_0 = x \right] \Pi(dx)
= \int_C \phi d\Pi.
\label{eq:hit_identity}
\end{align}
Combining \eqref{eq:hit_identity} and the Markov property
\begin{align*}
&\int_C \E\left[ \sum_{k = 1}^{\tau_C} \P \phi(X_k) \bigm| X_0 = x \right]
\Pi(dx)
\\
&= \sum_{k = 1}^{\infty} \int_C \E\left[ \P \phi(X_{k + 1}) I_{\tau_C \ge k + 1} \bigm| X_0 = x \right]
\Pi(dx)
+ \sum_{k = 1}^{\infty} \int_C \E\left[ \P \phi(X_{k}) I_{\tau_C = k} \bigm| X_0 = x  \right]
\Pi(dx)
\\
&= \int_C \E\left[ \sum_{k = 2}^{\tau_C} \phi(X_{k}) \bigm| X_0 = x  \right]
\Pi(dx)
+ \int_C \E\left[ \P\phi(X_{\tau_C}) I_{\tau_C < \infty} \bigm| X_0 = x  \right]
\Pi(dx)
\\
&= \int_C \E\left[ \sum_{k = 2}^{\tau_C} \phi(X_{k}) \bigm| X_0 = x \right]
\Pi(dx)
+ \int_C \P \phi d\Pi
\\
&= \int_C \E\left[ \sum_{k = 1}^{\tau_C} \phi(X_{k}) \bigm| X_0 = x \right]
\Pi(dx).
\end{align*}

Since $\Pi$ is the unique invariant measure and under the drift condition, we have
\[
\int_\X \phi d\Pi
= \int_C \E\left[ \sum_{k = 1}^{\tau_C} \phi(X_k) \bigm| X_0 = x \right]
\Pi(dx).
\]
This representation extends to simple functions $\phi : \X \to \R$ taking the positive and negative parts.

It remains to extend this representation to more general functions.
Now let $\phi : \X \to \R$ such that $|\phi| \le f$ and Lemma~\ref{lemma:f_integrable} implies
$
\int_\X | \phi | d\Pi
\le K/(1 - \gamma).
$
Since $\phi$ is measurable, we can choose a sequence of simple functions $\phi_n \to \phi$ pointwise with $|\phi_n| \le |\phi_{n + 1}|$.
Using Lemma~\ref{lemma:drift_martingale_ub} and dominated convergence \citep[Theorem 2.8.1]{bogachev:2007}, for each $x \in C$
$
\lim_{n \to \infty} (\K_C \phi_n)(x) 
= (\K_C \phi)(x).
$
Using Lemma~\ref{lemma:drift_martingale_ub} and since we have assumed $\int_C V d\Pi < \infty$, we have by dominated convergence \citep[Theorem 2.8.1]{bogachev:2007},
\begin{align*}
\int_\X \K_C \phi d\Pi
&= \int_C \lim_{n \to \infty} (\K_C \phi_n) d\Pi
= \lim_{n \to \infty} \int_C (\K_C \phi_n) \Pi(dx)
\\
&= \lim_{n \to \infty} \int_\X \phi_n d\Pi
= \int_\X  \lim_{n \to \infty} \phi_n d\Pi
= \int_\X \phi d\Pi.
\end{align*}

\end{proof}

\begin{proof}[Proof of Proposition~\ref{proposition:bias}]
Lemma~\ref{lemma:drift_martingale_ub} implies
\begin{align}
\sup_{x \in \X}| (\K_C \phi)(x) |
\le \frac{ A_R }
{ 1 - \gamma_R }.
\label{eq:lb_ub_f}
\end{align}
Since we assumed $\sup_{x \in C} V(x) < \infty$, then $A_R < \infty$ and we have shown that $\K_C \phi$ is bounded.
Here we used that $x$ is restricted to $C$ due to the definition of $\K_C$.
Since we have assumed the drift condition, the invariant distribution is unique, and $\sup_{x \in C} V(x) < \infty$, then Theorem~\ref{theorem:representation} implies the identity
\[
\E[ (\K_C \phi)(Y_1) w(Y_1)]
= \int_C \K_C \phi d\Pi
= \int_\X \phi d\Pi.
\]

By \citep[Theorem 2.1]{agapiou:2017}, we have the error bound for self-normalized importance sampling with the bounded function $\K_C \phi$.
We can slightly improve the bound, but the proof technique is essentially the same.
\begin{comment}
\begin{align*}
\E\left| 
\sum_{i = 1}^N w^N_Y(Y_i) (\K_C \phi)(Y_i)
- \E[ (\K_C \phi(Y_1) ) w(Y_1)]
\right|^2
\le 
&\frac{
4 \int w d\Pi
}{
N
}
\left[ \sup_{x \in \X} | (\K_C \phi)(x) | 
\right]^2
\\
&\le \frac{
4 A_R^2 \int w d\Pi
}{
N (1 - \gamma_R)^2
}
\end{align*}
\end{comment}
Using Cauchy-Schwarz, for any $\alpha > 0$, we have the upper bound
\begin{align}
&\E\left| 
\sum_{i = 1}^N w^N_Y(Y_i) (\K_C \phi)(Y_i)
- \E[ (\K_C \phi) w]
\right|^2
\\
&\le 
(1 + \alpha) \E\left| 
\sum_{i = 1}^N w^N_Y(Y_i) (\K_C \phi)(Y_i)
- \sum_{i = 1}^N \frac{w(Y_i)}{N} (\K_C \phi) (Y_i)
\right|^2
\\
&\hspace{1em}+ (1 + \alpha^{-1}) \E\left| 
\frac{1}{N} \sum_{i = 1}^N w(Y_i) (\K_C \phi)(Y_i)
- \E[(\K_C \phi) w]
\right|^2.
\label{eq:is_bound}
\end{align}
This second term in \eqref{eq:is_bound} is the variance of independent random variables:
\begin{align*}
\Var \left[ \frac{1}{N} \sum_{i = 1}^N (\K_C \phi) (Y_i) w(Y_i) \right]
&= \frac{1}{N} \Var\left[ (\K_C \phi)(Y) w(Y) \right]
\le [ \sup_{x \in \X} | (\K_C \phi)(x) | ]^2  \frac{\E\left[ w(Y_1)^2 \right]}{N}.
\end{align*}
For the first term in \eqref{eq:is_bound} , we have
\begin{align*}
&\left| 
\sum_{i = 1}^N
\frac{w(Y_i)}{\sum_j w(Y_j)} \K_C \phi(Y_i)
- \sum_{i = 1}^N \frac{w(Y_i)}{N} \K_C \phi(Y_i)
\right|
\\
&= \left| 
\sum_{i = 1}^N \K_C \phi(Y_i) \frac{w(Y_i)}{\sum_j w(Y_j)}
\left[ 
\frac{1}{N} \sum_{i = 1}^N w(Y_i) - 1
\right]
\right|
\\
&\le \max_{i \in 1, \ldots, N} | (\K_C \phi) (X_i) |
\left|
\frac{1}{N} \sum_{i = 1}^N w(Y_i) - 1
\right|.
\end{align*}
By a standard Monte Carlo argument,
$
\Var[ \overline{w} ]
= \Var[w(Y_1)] / N,
$
it follows then that
\begin{align*}
&\E\left| 
\sum_{i = 1}^N
\frac{w(Y_i)}{\sum_j w(Y_j)} (\K_C \phi)(Y_i)
- \sum_{i = 1}^N \frac{w(Y_i)}{N} (\K_C \phi)(Y_i)
\right|^2
\le [ \sup_{x \in \X} | (\K_C f)(x) | ]^2 \frac{\Var[w]}{N}.
\end{align*}
Combining these bounds choosing $\alpha = 1$, we have
\begin{align*}
&\E\left| 
\sum_{i = 1}^N w^N_Y(Y_i) (\K_C \phi)(Y_i)
- \E[ (\K_C \phi) w]
\right|^2
\\
&\le 
\frac{
2 \int w d\Pi
}{
N
}
\frac{A_R^2}{(1 - \gamma_R)^2}
+ \frac{
2 [\int w d\Pi - 1 ]
}{
N
}
\frac{A_R^2}{(1 - \gamma_R)^2}
\\
&\le 
\frac{4 A_R^2 \int w d\Pi}{N (1 - \gamma_R)^2}
- \frac{2 A_R^2}{N (1 - \gamma_R)^2}.
\end{align*}

\end{proof}

\begin{proof}[Proof of Proposition~\ref{proposition:variance}] 
Since $S_{\tau_C}^m(\phi) | Y$ are assumed independent, we have $\E[S_{\tau_C}^m(\phi) | Y ] = \int \K_C \phi d\Pi^N_Y$ and the variance bound
\[
\E\left[ \left| \frac{1}{M} \sum_{m = 1}^M S^m_{\tau_C}(\phi) - \int \K_C \phi d\Pi^N_Y \right|^2 \Bigm| Y \right]
\le \frac{
\E\left[  | S^m_{\tau_C}(\phi) |^2 \Bigm| Y \right]
}{M}.
\]
Let $\F_n$ denote the standard filtration with respect to $X_0, \ldots, X_n$.
Repeated application of the geometric drift condition for $x \in \X$ gives
\begin{align*} 
\E[ V(X_k) I_{\tau_C \ge k} \bigm| X_0 = x, Y]
&= \E\left[ I_{\tau_C \ge k} \E( V(X_k) \bigm| \F_{k - 1}, X_0 = x, Y ) | Y \right]
\\
&\le \E\left[ \gamma_R V(X_{k - 1})  I_{\tau_C \ge k} + KI_C(X_{k - 1}) I_{\tau_C \ge k} \bigm| X_0 = x, Y \right]
\\
&\le \gamma_R \E\left[ V(X_{k - 1})  I_{\tau_C \ge k-1} \bigm| X_0 = x, Y \right]
\\
&\le \gamma_R^k [V(x) + KI_C(x)].
\end{align*}
Here we used that for any $k \in \Z_+$, $I_{\tau_C \ge k} \le I_{\tau_C \ge k - 1}$ and $\{ \tau_C \ge k \}$ is $\F_{k - 1}$ measurable.

Using the Cauchy-Schwarz inequality,
\begin{align*}
&\E\left[ \left( \sum_{k = 1}^{n} \phi(X_k) I_{\tau_C \ge k} \right)^2 \bigm| X_0, Y \right]
\\
&= \sum_{k = 1}^n \sum_{\l = 1}^{n} \E\left[ \phi(X_k) I_{\tau_C \ge k} 
\phi(X_\l) I_{\tau_C \ge \l}
\bigm| X_0, Y 
\right]
\\
&\le \sum_{k = 1}^n \sum_{\l = 1}^{n} \sqrt{ \E\left[ \phi(X_k)^2 I_{\tau_C \ge k} \bigm| X_0, Y  \right] }
\sqrt{ 
\E\left[ 
\phi(X_\l)^2 I_{\tau_C \ge \l}
\bigm| X_0, Y
\right]
}.
\end{align*}
Now applying the drift condition,
\begin{align*}
\E\left[ \left( \sum_{k = 1}^{\tau_C \wedge n} \phi(X_k) \right)^2 \bigm| X_0, Y \right]
&\le \sum_{k = 1}^n \sum_{\l = 1}^{n} \sqrt{ \E\left[ V(X_k) I_{\tau_C \ge k} 
\bigm| X_0, Y \right] }
\sqrt{ 
\E\left[ 
V(X_\l) I_{\tau_C \ge \l}
\bigm| X_0, Y
\right]
}
\\
&\le [V(X_0) + K] \sum_{k = 1}^n \sum_{\l = 1}^{n} 
\gamma_R^{k/2} 
\gamma_R^{\l/2}
\\
&\le [V(X_0) + K] \sum_{k = 1}^n \sum_{\l = 1}^{n} 
\gamma_R^{k/2} 
\gamma_R^{\l/2}
\\
&\le [V(X_0) + K] \left[ \frac{\gamma_R/2}{1 - \gamma_R/2} \right]^2.
\end{align*}
Using Fatou's lemma \citep[Theorem 2.8.3]{bogachev:2007} and since $\sup_{x \in C} V(x) \le R$, we conclude
\[
\E\left[ S^m_{\tau_C}(\phi)^2 \bigm| X_0, Y  \right]
\le [R + K] \left[ \frac{\gamma_R/2}{1 - \gamma_R/2} \right]^2.
\]
Taking the iterated expectation with respect to $X_0 \sim \Pi^N_Y$ and then $Y_j \sim Q$ for $j = 1, \ldots, N$, completes the proof.
\end{proof}

\begin{proof}[Proof of Theorem~\ref{theorem:mse_bound}]
Since $V = f \ge 1$, then $|\phi| \le \sqrt{V} \le V$ and also $\sup_{x \in C} V(x) \le R$, so we are able to apply both upper bounds in Proposition~\ref{proposition:bias} and Proposition~\ref{proposition:variance}.
By Lemma~\ref{lemma:f_integrable},
$
\int_\X | \phi | d\Pi
\le \int_\X V d\Pi
\le K/(1 - \gamma).
$
\begin{comment}
For any $a, b \ge 0$, we have the inequality
\[
(a - b)^2 
= a^2 + b^2 - 2 \sqrt{\alpha}a b / \sqrt{\alpha}
\le (1 + \alpha) a^2 + (1 + 1/\alpha) b^2.
\]
\end{comment}
Using Cauchy-Schwarz for any $\alpha > 0$, we have the upper bound
\begin{align*}
\E\left[ \left| \frac{1}{M} \sum_{m = 1}^{M} S_{\tau_C}^m(\phi)
- \int_\X \phi d\Pi \right|^2 \right]
&\le (1 + \alpha) \E\left[ \left| \frac{1}{M} \sum_{m = 1}^{M} S_{\tau_C}^m(\phi)
- \int_\X \K_C \phi d\Pi^N_Y \right|^2 \right]
\\
&\hspace{1em}+ (1 + \alpha^{-1}) \E\left[ \left| \int_\X \K_C \phi d\Pi^N_Y
- \int_\X \phi d\Pi \right|^2 \right].
\end{align*}
Using Proposition~\ref{proposition:bias}, Proposition~\ref{proposition:variance} and optimizing over $\alpha$,
\begin{align*}
\E\left[ \left| \frac{1}{M} \sum_{m = 1}^{M} S_{\tau_C}^m(\phi)
- \int_\X \phi d\Pi \right|^2 \right]
&\le (1 + \alpha) \frac{[R + K]}{M} \left[ \frac{ \gamma_R/2 }{ 1 - \gamma_R/2 } \right]^2
+ (1 + \alpha^{-1}) \frac{4 A_R^2 \int w d\Pi}{N(1 - \gamma_R )^2}
\\
&\le 
\left(
\frac{\gamma_R \sqrt{R + K}}{ \sqrt{M} (2 - \gamma_R) }
+ \frac{ 2 A_R \sqrt{\int w d\Pi} }{\sqrt{ N } (1 - \gamma_R )}
\right)^2.
\end{align*}
\end{proof}

\section{Supporting technical results for Section~\ref{section:concentration_mult}}

Similar to Lemma~\ref{lemma:drift_martingale_ub}, the following upper bound is implied by the drift condition.

\begin{lemma}
\label{lemma:mult_drift_martingale_ub}
If the multiplicative drift condition \eqref{assumption:drift_mult} holds, then for every $x \in C$
\begin{align*}
\E \left( \exp\left[ (1 - \gamma_R) \sum_{k = 1}^{\tau_C} f(X_k) \bigm| X_0 = x \right]
\right)
&\le \exp\left[ 
V(x) - (1 - \gamma) f(x) + 2 K
\right]
\end{align*}
where $\gamma_R = \gamma + K/R$.
\end{lemma}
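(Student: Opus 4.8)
The plan is to mirror the additive argument of Lemma~\ref{lemma:drift_martingale_ub}, but replacing the additive supermartingale by a multiplicative (exponential) one. First I would rewrite the multiplicative drift condition in a localized form. Exponentiating Assumption~\ref{assumption:drift_mult} gives $\P(\exp V)(x) \le \exp[V(x) - (1-\gamma)f(x) + K]$, and writing $1-\gamma = (1-\gamma_R) + K/R$ together with the definition \eqref{eq:set_C} of $C$ (so that $K(1 - f(x)/R) \le K I_C(x)$ for all $x$) yields
\[
\P(\exp V)(x) \le \exp\left[ V(x) - (1 - \gamma_R) f(x) + K I_C(x) \right].
\]
This is the exact multiplicative counterpart of the inequality $\P V(x) - V(x) \le -(1-\gamma_R)f(x) + K I_C(x)$ used in Lemma~\ref{lemma:drift_martingale_ub}, and it is the engine of the whole proof.

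Next I would introduce the exponential supermartingale. For $n \ge 1$ define
\[
M_n = \exp\left[ V(X_n) \right] \exp\left[ (1 - \gamma_R) \sum_{k=1}^{n-1} f(X_k) - K \sum_{k=1}^{n-1} I_C(X_k) \right].
\]
Conditioning on $\F_n$ and using the Markov property together with the localized drift inequality above, the term $\E[\exp V(X_{n+1}) \mid \F_n] = \P(\exp V)(X_n)$ contributes exactly the factors $\exp[-(1-\gamma_R)f(X_n) + K I_C(X_n)]$ that cancel the $k=n$ summands, so that $\E[M_{n+1} \mid \F_n] \le M_n$; hence $(M_n)_{n\ge 1}$ is a nonnegative supermartingale. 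Applying the optional sampling theorem (as in Lemma~\ref{lemma:drift_martingale_ub}) to the bounded stopping time $(\tau_C + 1) \wedge n$ compared with the constant time $1$ gives $\E[M_{(\tau_C+1)\wedge n} \mid X_0 = x] \le \E[M_1 \mid X_0 = x]$, and the latter equals $\P(\exp V)(x) \le \exp[V(x) - (1-\gamma)f(x) + K]$ by the bare drift condition. This is precisely where the $(1-\gamma)$ (rather than $(1-\gamma_R)$) on the right-hand side of the claim originates.

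Finally I would evaluate $M_{\tau_C+1}$ on $\{\tau_C < \infty\}$. By the definition $\tau_C = \inf\{t \ge 1 : X_t \in C\}$ we have $X_k \notin C$ for $1 \le k \le \tau_C - 1$ and $X_{\tau_C} \in C$, so $\sum_{k=1}^{\tau_C} I_C(X_k) = 1$; combined with $V \ge 0$ this gives $M_{\tau_C+1} \ge e^{-K}\exp\left[(1-\gamma_R)\sum_{k=1}^{\tau_C} f(X_k)\right]$. Passing to the limit $n \to \infty$ via Fatou's lemma and the bound from the previous paragraph yields
\[
e^{-K}\, \E\left[ \exp\left( (1-\gamma_R)\sum_{k=1}^{\tau_C} f(X_k) \right) \Bigm| X_0 = x \right] \le \exp\left[ V(x) - (1-\gamma)f(x) + K \right],
\]
and multiplying through by $e^K$ produces the $+2K$ in the stated exponent. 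The main obstacle I anticipate is the rigorous justification of the limiting step: unlike the additive case, the exponential structure makes integrability of the stopped process $M_{(\tau_C+1)\wedge n}$ delicate, and I must invoke that $\tau_C < \infty$ almost surely (which follows from the drift exactly as in the proof of Theorem~\ref{theorem:representation}) to identify $\liminf_n M_{(\tau_C+1)\wedge n}$ with $M_{\tau_C+1}$ before applying Fatou.
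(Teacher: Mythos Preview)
Your proposal is correct and follows essentially the same route as the paper: localize the multiplicative drift to obtain $\P(\exp V)(x)\le\exp[V(x)-(1-\gamma_R)f(x)+KI_C(x)]$, build the exponential supermartingale, apply optional sampling, and pass to the limit via Fatou. Your choice to stop at $(\tau_C+1)\wedge n$ and compare with the constant time $1$ (so that $\E[M_1\mid X_0=x]=\P(\exp V)(x)$) is in fact slightly cleaner than the paper's version, which compares $M_{\tau_C\wedge n}$ with $M_2$ and has a minor indexing looseness; your evaluation of $\sum_{k=1}^{\tau_C}I_C(X_k)=1$ to extract the extra factor $e^{-K}$ is exactly what the paper does implicitly.
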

\begin{proof}
For any $x \in \X$, the drift condition implies
\begin{align*}
\log[ \exp(-V(x)) (\P \exp(V)) (x) ]
&\le - (1 - \gamma - K/R) f(x) + K(1 - f(x)/R)
\\
&\le - (1 - \gamma_R) f(x) + K I_C(x).
\end{align*}
Define
\[
M_{n + 1} = \exp\left[ V(X_{n + 1}) + (1 - \gamma_R) \sum_{k = 1}^{n} f(X_k) - K \sum_{k = 1}^{n} I_C(X_k) \right].
\]
For every $n \in \Z_+$, let $\F_n$ be the standard filtration for $X_0, X_1, \ldots, X_n$ and then using the drift condition,
$
\E_x(M_{n + 1}| \F_n)
\le M_n
$
and $M_n$ is a supermartingale.
By the optional sampling theorem \citep[Theorem 9.12]{Kallenberg2021},
\begin{align*}
&\E \left( \exp\left[ - K + (1 - \gamma_R) \sum_{k = 1}^{\tau_C \wedge n} f(X_k) \bigm| X_0 = x \right]
\right)
\le \E(M_{\tau_C \wedge n} \bigm| X_0 = x )
\\
&\le \E(M_2 \bigm| X_0 = x )
\\
&\le \E(\exp(V(X_1)) \bigm| X_0 = x ).
\end{align*}
Taking the limit and using Fatou's lemma \citep[Theorem 2.8.3]{bogachev:2007},
\begin{align*}
\E \left( \exp\left[(1 - \gamma_R) \sum_{k = 1}^{\tau_C} f(X_k) \bigm| X_0 = x \right]
\right)
&\le \E\left[ \exp( V(X_1)) + K ) \bigm| X_0 = x \right]
\\
&\le \exp\left[ 
V(x) - (1 - \gamma) f(x) + 2 K
\right].
\end{align*}
\end{proof}

We have the identity under the multiplicative drift.
The following proof is similar to Theorem~\ref{theorem:representation}.

\begin{theorem}
\label{theorem:multi_representation}
Assume the Markov kernel $\P$ has unique invariant measure $\Pi$ and the multiplicative drift condition \eqref{assumption:drift_mult} holds. 
If $sup_{x \in C} V(x) < \infty$, then for every $\phi : \X \to \R$ such that $|\phi| \le f$, then $\int_\X \phi d\Pi < \infty$ and
\begin{align*}
\int_\X \K_C \phi d\Pi
= \int_\X \phi d\Pi.
\end{align*}
\end{theorem}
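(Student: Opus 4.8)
The plan is to mirror the proof of Theorem~\ref{theorem:representation} as closely as possible, isolating the two places where the additive drift condition \eqref{assumption:drift} was invoked and replacing each with a consequence of the multiplicative drift condition \eqref{assumption:drift_mult}. The combinatorial core of the earlier argument---the invariance-plus-induction inequality $\int \phi d\Pi \ge \sum_{k=1}^n \int_C \E[\phi(X_k) I_{\tau_C \ge k} \mid X_0 = x] \Pi(dx)$, the resulting comparison $\Pi(B \cap C) \ge \int_C \Prob[X_{\tau_C} \in B, \tau_C < \infty \mid X_0 = x] \Pi(dx)$, the contradiction forcing equality, and the use of the Markov property to obtain $\int_\X \phi d\Pi = \int_C \E[\sum_{k=1}^{\tau_C}\phi(X_k) \mid X_0 = x]\Pi(dx)$ for signed simple $\phi$---uses only invariance and uniqueness of $\Pi$ together with the fact that $\Prob[\tau_C < \infty \mid X_0 = x] = 1$ for $x \in C$. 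I would therefore carry that portion over verbatim, once this last fact is re-established under the new hypotheses.

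To prove $\Prob[\tau_C < \infty \mid X_0 = x] = 1$ for $x \in C$, I would invoke Lemma~\ref{lemma:mult_drift_martingale_ub}, which gives the finite bound $\E[\exp((1-\gamma_R)\sum_{k=1}^{\tau_C} f(X_k)) \mid X_0 = x] \le \exp[V(x) - (1-\gamma)f(x) + 2K] < \infty$ whenever $\sup_{x \in C} V(x) < \infty$. Since $C = \{f \le R\}$, every $X_k$ with $k < \tau_C$ lies outside $C$ and hence satisfies $f(X_k) > R > 0$; thus on the event $\{\tau_C = \infty\}$ the partial sums $\sum_{k=1}^n f(X_k)$ exceed $Rn$ and diverge, making the exponential inside the expectation infinite on that event. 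Finiteness of the expectation then forces $\Prob[\tau_C = \infty \mid X_0 = x] = 0$. This replaces the ``$f \ge 1$'' step of Theorem~\ref{theorem:representation}.

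For the final extension from simple functions to arbitrary $\phi$ with $|\phi| \le f$ I need two ingredients. First, integrability $\int_\X \phi d\Pi < \infty$: applying Jensen's inequality to the convex function $\exp$ gives $\P V \le \log[\P(\exp V)]$, so the multiplicative drift \eqref{assumption:drift_mult} implies the additive inequality $\P V - V \le -(1-\gamma)f + K$, and the argument of Lemma~\ref{lemma:f_integrable} (which uses only this inequality, invariance, and $f \ge 0$) yields $\int_\X f d\Pi \le K/(1-\gamma) < \infty$, whence $\int_\X |\phi| d\Pi < \infty$. Second, a dominating bound on $\K_C \phi$: taking logarithms in Lemma~\ref{lemma:mult_drift_martingale_ub} via Jensen gives $(\K_C|\phi|)(x) \le \E[\sum_{k=1}^{\tau_C} f(X_k) \mid X_0 = x]\, I_C(x) \le \frac{V(x) - (1-\gamma)f(x) + 2K}{1-\gamma_R}\, I_C(x)$, which is bounded by $B_R/(1-\gamma_R)$ on $C$ and vanishes off $C$ because $\sup_{x \in C}V(x) < \infty$. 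With these in hand I would choose simple functions $\phi_n \to \phi$ pointwise with $|\phi_n| \le |\phi_{n+1}| \le f$ and pass to the limit through both $\K_C$ and $\int_C (\cdot)\, d\Pi$ by dominated convergence, the dominating constant $B_R/(1-\gamma_R)$ being $\Pi$-integrable since $\Pi(C) \le 1$.

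I expect the only genuinely delicate point to be the return-time step: the additive proof could read off $\Prob[\tau_C < \infty \mid X_0 = x] = 1$ immediately from $f \ge 1$, whereas here I must extract it from the exponential-moment bound, and this relies essentially on $f$ being bounded below by $R$ strictly outside $C$ so that an infinite excursion forces the exponential to blow up. Everything else is a routine transcription of Theorem~\ref{theorem:representation}, with the additive bounds replaced by their Jensen-derived multiplicative counterparts.
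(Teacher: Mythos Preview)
Your proposal is correct and follows essentially the same route as the paper's: both transplant the simple-function identity from Theorem~\ref{theorem:representation} verbatim and then extend to general $|\phi| \le f$ by dominated convergence, with Lemma~\ref{lemma:mult_drift_martingale_ub} supplying the uniform bound on $\K_C f$ over $C$. You are in fact more thorough than the paper in two places---explicitly recovering $\Prob[\tau_C < \infty \mid X_0 = x] = 1$ from the finite exponential moment (the paper simply writes ``as in Theorem~\ref{theorem:representation}'') and deriving $\int_\X f\, d\Pi < \infty$ via the Jensen-implied additive drift---while the paper uses $1 + x \le e^x$ rather than Jensen to extract the dominating constant, but these are cosmetic differences.
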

\begin{proof}[Proof of Theorem~\ref{theorem:multi_representation}]
Using the multiplicative drift and since $\Pi$ is the unique invariant measure and under the drift condition, we can show as in Theorem~\ref{theorem:representation} for every simple function $\phi : \X \to \R$,
\[
\int_\X \phi d\Pi
= \int_C \K_C \phi d\Pi.
\]
Now let $\phi : \X \to \R$ such that $|\phi| \le f$.
Since $\phi$ is measurable, we can choose a sequence of simple functions $\phi_n \to \phi$ pointwise with $|\phi_n| \le |\phi_{n + 1}|$.
Using Lemma~\ref{lemma:mult_drift_martingale_ub} and $1 + x \le e^x$ for $x \ge 0$,
\begin{align*} 
(1 - \gamma_R) (\K_C f)(x) 
&\le \E\left\{ \exp\left[ (1 - \gamma_R) \sum_{k = 1}^{\tau_C} f(X_k) \right] \bigm| X_0 = x \right\}
\\
&\le \exp(V(x) + 2K).
\end{align*}
Since we have assumed $\sup_{x \in C} V(x) < \infty$, then $\sup_{x \in C} (\K_C f)(x) < \infty$.
By dominated convergence \citep[Theorem 2.8.1]{bogachev:2007},
\begin{align*}
\int_\X \K_C \phi d\Pi
= \int_\X \lim_{n \to \infty} \K_C \phi_n(x) \Pi(dx)
=  \lim_{n \to \infty} \int_\X \K_C \phi_n(x) \Pi(dx)
= \int_\X \phi d\Pi.
\end{align*}

\end{proof}

\begin{proof}[Proof of Proposition~\ref{proposition:bias_bounded}]
We have the upper bound
\begin{align*}
&\left| 
\sum_{i = 1}^N
\frac{w(Y_i)}{\sum_j w(Y_j)} (\K_C \phi)(Y_i)
- \sum_{i = 1}^N \frac{w(Y_i)}{N} (\K_C \phi)(Y_i)
\right|
&\le \max_{i \in 1, \ldots, N} (\K_C f)(X_i)
\left|
\frac{1}{N} \sum_{i = 1}^N w(Y_i) - 1
\right|.
\end{align*}
This implies the upper bound
\begin{align*}
&\left| 
\sum_{i = 1}^N w^N_Y(Y_i) (\K_C \phi)(Y_i)
- \E[(\K_C\phi)(Y_1) w(Y_1)]
\right|
\\
&\le 
\left| 
\sum_{i = 1}^N w^N_Y(Y_i) (\K_C \phi)(Y_i)
- \sum_{i = 1}^N \frac{w(Y_i)}{N} (\K_C \phi)(Y_i)
\right|
\\
&\hspace{1em}+ \left| 
\frac{1}{N} \sum_{i = 1}^N w(Y_i) (\K_C \phi)(Y_i)
- \E[(\K_C \phi)(Y_1) w(Y_1)]
\right|
\\
&\le 
\sup_{x} (\K_C f)(x) 
\left| 
\frac{1}{N} \sum_{i = 1}^N w(Y_i)
- 1
\right|
\\
&\hspace{1em}+ \left| 
\frac{1}{N} \sum_{i = 1}^N w(Y_i) (\K_C \phi)(Y_i)
- \E[(\K_C \phi)(Y_1) w(Y_1)]
\right|.
\end{align*}
Since for $a, b \ge 0$, $a,b < \e/2$ implies $a + b < \e$, a union probability bound and Hoeffding's inequality \citep{hoeffding:1963},
\begin{align*}
&\Prob\left( 
\left| 
\sum_{i = 1}^N w^N_Y(Y_i) (\K_C \phi)(Y_i)
- \E[(\K_C \phi)(Y_1) w(Y_1)]
\right|
\ge \e
\right)
\\
&\le \Prob\left( 
\sup_{x} (\K_C f)(x) 
\left| 
\frac{1}{N} \sum_{i = 1}^N w(Y_i)
- 1
\right|
\ge \e/2
\right)
\\
&\hspace{1em}+ \Prob\left( 
\left| 
\frac{1}{N} \sum_{i = 1}^N w(Y_i) (\K_C \phi)(Y_i)
- \E[(\K_C \phi)(Y_1) w(Y_1)]
\right|
\ge \e/2
\right)
\\
&\le 4 \exp\left( 
-\frac{\e^2 N}{2 \left[ \sup_x (\K_C f)(x) \right]^2 w_*^2 }
\right).
\end{align*}
Since the multiplicative drift condition holds, $\Pi$ is the unique invariant distribution and $\sup_{x \in C} V(x) < \infty$, then Theorem~\ref{theorem:multi_representation} implies
\[
\E[(\K_C \phi)(Y_1) w(Y_1)]
= \int_C (\K_C \phi) d\Pi
= \int_\X \phi d\Pi.
\]
By Lemma~\ref{lemma:mult_drift_martingale_ub} and using $1 + x \le e^x$, then for $x \in C$,
\begin{align*}
(1 - \gamma_R) (\K_C |\phi|)(x)
&\le (1 - \gamma_R) (\K_C f)(x)
\\
&\le  
\exp\left[ V(x) - (1 - \gamma)f(x) + 2K \right].
\end{align*}
\end{proof}

\begin{proof}[Proof of Proposition~\ref{proposition:chernoff}]
Let $\lambda > 0$ and let $\phi : \X \to \R$ with $|\phi| \le (1 - \gamma_R) f /(1 + \lambda)$.
Since $x^2/2 + 1 \le e^x$ for $x \ge 0$ and using a second order Taylor expansion of $\exp(\lambda \cdot)$,
\begin{align*}
&\E\left[\exp\left( \lambda S^m_{\tau_C}(\phi) - \lambda \int \K_C \phi d\Pi^N_Y \right) \bigm| Y \right]
\\
&= 1 + \lambda^2 \E\left[ \int_0^1 \exp\left[ t \lambda \left( S^m_{\tau_C}(\phi) - \int \K_C \phi d\Pi^N_Y \right) 
\right] \left( S^m_{\tau_C}(\phi) - \int \K_C \phi d\Pi^N_Y \right)^2 (1 - t)dt \bigm| Y \right]
\\
&\le 1 + \lambda^2 
\E\left[ 
\exp\left(
(1 + \lambda) S^m_{\tau_C}(|\phi|) + (1 + \lambda) \int \K_C |\phi| d\Pi^N_Y  
\right)
\bigm| Y
\right].
\\
&\le 1 + \lambda^2 
\E\left[ 
\exp\left(
(1 - \gamma_R) S^m_{\tau_C}(f) + (1 - \gamma_R) \int \K_C f d\Pi^N_Y  
\right)
\bigm| Y
\right].
\end{align*}
By Lemma~\ref{lemma:mult_drift_martingale_ub}, we have the upper bound
\[
\sup_{x \in C} \E\left[ 
(1 - \gamma_R) \exp(S^m_{\tau_C}(f)) 
\Bigm| X_0 = x, Y \right]
\le  
\exp[ B_R ].
\]
Since $\exp$ is convex, we have by Jensen's inequality,
\[
\exp\left[ (1 - \gamma_R) \int \K_C f d\Pi^N_Y \right]
\le \int \E\left\{ \exp\left[ (1 - \gamma_R) S_{\tau_C}(f) \bigm| X_0 = x, Y \right]
\right\} \Pi^N_Y(dx)
\le \exp[ B_R ].
\]
Using this upper bound and $x + 1 \le e^x$ for $x \ge 0$,
\begin{align*}
\E\left[\exp\left( \lambda S_{\tau_C}(\phi) - \lambda \int \K_C \phi d\Pi^N_Y \right) \bigm| Y \right]
&\le 1 + \lambda^2 
\exp( 2 B_R )
&\le \exp\left[ \lambda^2 \exp(2 B_R) \right].
\end{align*}
We have shown this is a sub-Gaussian random variable.
Since $S_{\tau_C}^m(\phi) | Y$ are assumed independent and $\E[S_{\tau_C}^m(\phi) | Y ] = \int \K_C \phi d\Pi^N_Y$, taking the Chernoff bound with the optimal $\lambda = \e \exp(-2 B_R) / 2$,
\begin{align*}
\Prob\left[ \frac{1}{M} \sum_{m = 1}^M S^m_{\tau_C}(\phi) - \int \K_C \phi d\Pi^N_Y \ge \e \Bigm| Y \right]
%\\
%&\le \exp(-\lambda M \e ) \prod_{m = 1}^M \E\left[ \exp\left( \lambda S^m_{\tau_C}(\phi) - \lambda \int \K_C \phi d\Pi^N_Y \right) \Bigm| Y \right]
&\le \exp\left[ M \e 
\left\{ 
\lambda^2 
\frac{\exp(2 B_R)}{\e} - \lambda
\right\}
\right]
\\
&\le \exp\left[ -\frac{M \e^2}{ 4 \exp(2 B_R)}
\right].
\end{align*}
Taking the union probability bound and substituting for the function $\phi = (1 - \gamma_R) \psi / (1 + \lambda)$ with $|\psi| \le f$,
\begin{align*}
\Prob\left[ \left| \frac{1}{M} \sum_{m = 1}^M S^m_{\tau_C}(\psi) - \int \K_C \psi d\Pi^N_Y \right| \ge \e \Bigm| Y \right]
\le 2 \exp\left[ -\frac{M \e^2 (1 - \gamma_R)^2}{ 4 (1 + \lambda)^2 \exp[ 2 B_R]}
\right].
\end{align*}
Using that $\lambda \le 1/2$ and taking the iterated expectation completes the proof.
\end{proof}

\begin{proof}[Proof of Theorem~\ref{theorem:mult_error_bound}]
A probability union bound implies
\begin{align*}
&\Prob\left( \left| \frac{1}{N} \sum_{m = 1}^{M} S_{\tau_C}^m(\phi)
- \int_\X \phi d\Pi \right| \ge \e \right)
\\
&\le \Prob\left( \left| \frac{1}{N} \sum_{m = 1}^M S_{\tau_C}^m(\phi)
- \int \K_C \phi d\Pi^N_Y \right| \ge \e/2 \right)
+ \Prob\left( \left| \int \K_C \phi d\Pi^N_Y - \int \phi d\Pi \right| \ge \e/2 \right).
\end{align*}
The proof is completed using Proposition~\ref{proposition:chernoff}, and Proposition~\ref{proposition:bias_bounded}.
\end{proof}

\section{Supporting technical results for Section~\ref{section:pg_application}}

\begin{proof}[Proof of Proposition~\ref{proposition:initial}]
By \eqref{eq:Hess_assumption} the largest eigenvalue of $\nabla^2 \l(x)$ uniformly over $x \in \R^d$ is bounded by $\lambda^*$.
Since $\l$ is twice continuously differentiable, using Taylor expansion, we have for all $x \in \R^d$
\begin{align*}
f(x) - f(x^*)
&= (x - x^* )^T \int_0^1 [  \nabla^2 \l (x^* + t (x - x^*)) + \Sigma^{-1} ] (x - x^* )  (1 - t) dt
\\
&\le \frac{1}{2} (x - x^* )^T [  \lambda^* + \Sigma^{-1} ] (x - x^* ).
\end{align*}
This then implies a lower bound on the normalizing constant using properties of the determinant
\begin{align*}
\int_{\R^d} \exp[-( f(x) - f(x^*) ] dx
&\ge \int_{\R^d} \exp\left[- \frac{1}{2} (x - x^* )^T [  \lambda^* + \Sigma^{-1} ] (x - x^* ) \right] dx
\\
&\ge (2 \pi)^{d/2} \det( \lambda^* I + \Sigma^{-1})^{-1/2}.
\end{align*}
By convexity of $\l$, Taylor expansion implies for each $x \in \R^d$
\[
f(x) - f(x^*)
\ge \frac{1}{2} (x - x^* )^T \Sigma^{-1} (x - x^* ).
\]
Combining these estimates and using properties of the determinant, we have the upper bound
\begin{align*}
&\int_{\R^d} \frac{\pi(x)}{q(x)} \pi(x) dx
\\
&= (2 \pi)^{d/2} (1/2 + h)^{d/2} \det(\Sigma)^{1/2} \frac{1}{Z^2}
\int_{\R^d} \exp\left[ \frac{1}{2 (1/2 + h)} (x - x^* )^T \Sigma^{-1} (x - x^* ) -2 f(x) \right]
dx
\\
&\le \frac{(1/2 + h)^{d/2}
\det( \lambda^* I + \Sigma^{-1}) \det(\Sigma)^{1/2} 
}{
(2 \pi)^{d/2}
}
\int_{\R^d}
\exp\left[ -\frac{1}{2} \frac{2 h}{1/2 + h} (x - x^* )^T \Sigma^{-1} (x - x^* ) \right]
dx
\\
&\le
\det( \lambda^* I + \Sigma^{-1}) \det(\Sigma) 
\frac{
(1/2 + h)^{d}
}{
(2h)^{d/2}
}
\\
&\le 
\det( \lambda^* \Sigma + I) 
\left( \frac{1}{ 2 \sqrt{2 h} } + \sqrt{\frac{h}{2}} \right)^{d}.
\end{align*}
\end{proof}

\begin{proof}[Proof of Proposition~\ref{proposition:pg_sampler}]
We look to apply Theorem~\ref{theorem:mse_bound}.
It is readily seen that the negative log-likelihood is infinitely differentiable, and for every $v \in \R^d$, 
\[
v^T \nabla^2 \l_n(\beta) v 
= v^T X^T \text{diag}[ \sigma(X \beta) (1 - \sigma(X \beta)) ] X v 
\le \frac{\norm{X^T X}_2}{4} \norm{v}_2^2.
\]
Since the Hessian is symmetric, we have shown that
\[
\norm{ \nabla^2 \l_n(\beta) }_2
\le \frac{\norm{X^T X}_2}{4}.
\]
We may then apply the bound in Proposition~\ref{proposition:initial} to the chosen importance sampling proposal $Q_h$.

The proof will be complete if we obtain a drift condition with $V$ defined by $V(\beta) = 1 + \norm{\beta}_2^2$.
Fix $\beta \in \R^d$ and we will take the conditional expectation of the first step of the Gibbs sampler with $\beta_1 | \w_1$ and then $\w_1 | \beta$. 
We have the variance identity
\begin{align*}
\E\left( \norm{\beta_1}_2^2 | \w_1, \beta \right)
&= \norm{\mu(\w_1)}_2^2 + \tr(\Sigma(\w_1)).
\end{align*}
Let $\Omega_1^{1/2} = \text{diag}(\w^{1/2})$.
Since $\Sigma$ is SPD, there exists a decomposition with a SPD matrix $\Sigma^{-1/2}$ so that the $\Sigma^{-1} = \Sigma^{-1/2} \Sigma^{-1/2}$.
By singular value decomposition \citep[Theorem 2.6.3]{Horn2012}, we may write
$\Omega_1^{1/2} X \Sigma^{1/2} = U_{\w_1} D_{\w_1} V_{\w_1}^T$ where $V_{\w_1} \in \R^{n \times n}$ is an orthogonal matrix so that $V_{\w_1}^T V_{\w_1} = V_{\w_1} V_{\w_1}^T = I_n$ and $U_{\w_1} \in \R^{d \times d}$ is also orthogonal and $D_{\w_1} \in \R^{n \times d}$ is a block diagonal matrix.
Then we have the decomposition
\begin{align*}
\Sigma(\w_1)
&= \Sigma^{1/2} ( [ \Omega_1^{1/2} X \Sigma^{1/2} ]^T \Omega_1^{1/2} X \Sigma^{1/2} + I )^{-1} \Sigma^{1/2}
\\
&= \Sigma^{1/2} V_{\w_1} ( D_{\w_1}^T D_{\w_1} + I )^{-1} V_{\w_1}^T \Sigma^{1/2}.
\end{align*}
This decomposiiton implies by the submultiplicative property of the matrix norm, \begin{align*}
\norm{\mu(\w_1)}_2^2
\le \norm{\Sigma(\w_1) }_2^2 \norm{ X^T (Y - (1/2) 1_d)}_2^2
\le \norm{ \Sigma^{1/2} }_2^2 \norm{ X^T (Y - (1/2) 1_d)}_2^2.
\end{align*}
Using cyclic property of the trace, we also have 
\begin{align*}
\tr\left[ \Sigma(\w_1) \right] 
\le \tr\left[ 
V_{\w_1}^T \Sigma V_{\w_1} ( D_{\w_1}^T D_{\w_1} + I )^{-1}
\right]
\le \tr\left[ 
V_{\w_1}^T \Sigma V_{\w_1}
\right]
\le \tr\left[ 
V_{\w_1} V_{\w_1}^T \Sigma
\right]
\le \tr\left[ \Sigma \right].
\end{align*}
Combining these estimates and taking the iterated expectation
\begin{align*}
\E\left( \norm{\beta_1}_2^2 + 1 | \beta \right)
&\le 1 + \norm{ \Sigma }_2^2 \norm{ X^T (Y - 1/2 1_d) }_2^2 + \tr(\Sigma).
\end{align*}.

This drift condition combined with a minorization shows the posterior is the unique invariant measure \citep{hairer:2021} or Harris recurrence would work as well \citep[Theorem 10.4.4]{meyn:tweedie:2009}.
\end{proof}

\bibliography{main.bib}

\end{document}